\newtheorem{thm}{Theorem}[section]
\newtheorem{prop}[thm]{Proposition}
\newtheorem{lem}[thm]{Lemma}
\newtheorem{coro}[thm]{Corollary}
\theoremstyle{remark}
\newcommand{\tdef}[1]{\underline{\smash{\textcolor{blue}{\emph{#1}}}}}
\newcommand{\nonsep}{\cite{nonsep}}
\newcommand{\figpath}{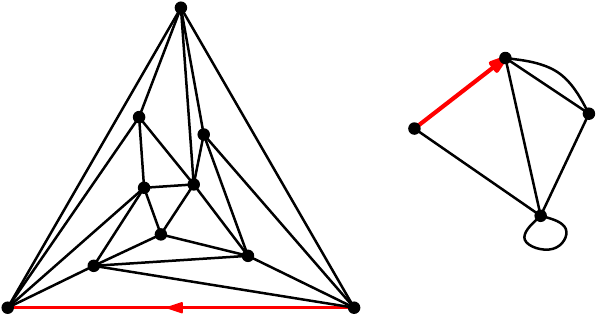}
\author{Wenjie Fang \thanks{Partially supported by \textit{Agence nationale de la Recherche} under grant number ANR 12-JS02-001-01 ``Cartaplus''. The author is now affiliated to Institute of Discrete Mathematics, Technical University of Graz. Email: fang@math.tugraz.at} \\ Laboratoire en l'informatique du parall\'elisme \\ \'Ecole normale sup\'erieure de Lyon}
\title{Planar triangulations, bridgeless planar maps and Tamari intervals}
\begin{document}
\maketitle

\abstract{We present a direct bijection between planar 3-connected triangulations and bridgeless planar maps, which were first enumerated by Tutte (1962) and Walsh and Lehman (1975) respectively. Previously known bijections by Wormald (1980) and Fusy (2010) are all defined recursively. Our direct bijection passes by a new class of combinatorial objects called ``sticky trees''. We also present bijections between sticky trees, intervals in the Tamari lattices and closed flows on forests. With our bijections, we recover several known enumerative results about these objects. We thus show that sticky trees can serve as a nexus of bijective links among all these equi-enumerated objects.}

\section{Introduction}

Planar maps, which are embeddings of graphs on the plane, have been known to give nice enumerative formulas, which can be explained by simple bijections (\textit{cf.} \cite{Schaeffer:survey}). Furthermore, using bijections, they can be related to other objects in enumerative and algebraic combinatorics, which makes them a suitable tool for structural study.

In the enumeration of planar maps, we often consider \emph{rooted} planar maps, in which an edge called the \emph{root} is marked and given an orientation. Seemingly different classes of rooted planar maps can be enumerated by the same formula. It was first proved by Tutte \cite{tutte1962triangulation} that the number of rooted planar 3-connected triangulations with $3(n+1)$ edges (thus $n$ internal vertices) is given by
\begin{equation} \label{eq:count}
  \frac{2}{n(n+1)} \binom{4n+1}{n-1}.
\end{equation}
Later, it was proved by Walsh and Lehman \cite{walsh-lehman} that \eqref{eq:count} is also the number of rooted loopless planar maps (thus also rooted bridgeless planar maps by duality) with $n$ edges. Both proofs used functional equations. Bijective explanations were later given by Wormald \cite{wormald} and Fusy \cite{fusy-bij}, both with recursively-defined bijections.

There are other combinatorial objects enumerated by \eqref{eq:count}, such as intervals in the Tamari lattice of order $n$ \cite{ch06} and closed flows on forests with $n$ nodes \cite{chapoton-chatel-pons}. Tamari intervals are especially interesting, due to their rich structure and relation to deep algebraic structures (\textit{cf.} \cite{bergeron-preville}). Previous studies \cite{BB2009intervals, bousquet-fusy-preville} also hint a link between planar maps and intervals in Tamari-like lattices to be further explored.

Our main results are thus two-fold, one relating two classes of planar maps, the other relating planar maps to Tamari intervals. Our first result is a direct bijection between rooted bridgeless planar maps and rooted planar 3-connected triangulations, via another structure called ``sticky trees''. With different depth-first explorations, we can transform bijectively both classes of planar maps into sticky trees, which leads to our bijection. To the author's knowledge, this is the first direct bijection between these classes of planar maps.

Our second result is a series of bijections between sticky trees, Tamari intervals and closed flow on forests, which give new proofs of some known enumerative results. These bijections preserve certain structures of the classes, and can be seen as special cases of the ones in \nonsep{}. A composition of our bijections gives an alternative to the bijection from Tamari intervals to planar triangulations in \cite{BB2009intervals}. With our bijections, we can see sticky trees as a nexus for bijections between classes of objects counted by \eqref{eq:count} mentioned above. Our work thus contributes as a unification of bijective understandings about the relation between Tamari intervals and other objects, especially planar maps.

This article is organized as follows. In Section~\ref{sec:pre}, we introduce notions and definitions we need to establish our bijections. In Section~\ref{sec:bij}, we define our bijections between sticky trees, planar bridgeless map and planar triangulations, and prove their validity. We then look at the bijection from sticky trees to Tamari intervals and closed flows on forests in Section~\ref{sec:bij-oth}. We end this article with a discussion in Section~\ref{sec:dis}.

\section{Preliminaries} \label{sec:pre}

\tdef{Planar maps} are drawings of connected graphs on the sphere, defined up to orientation-preserving diffeomorphism, such that edges cross only at their common endpoint vertices. Planar regions split by edges of a planar map $M$ are called its \tdef{faces}. In map enumeration, we usually consider \tdef{rooted maps}, where an edge called the \tdef{root} is distinguished and oriented. The face on the left of the root is called the \tdef{root face}, which is drawn by convention as the face containing the point at infinity. We only consider rooted planar maps from now on.

In the following, we will concentrate on two families of planar maps: planar 3-connected triangulations (or simply \emph{planar triangulations} hereinafter) and bridgeless planar maps. A \tdef{planar triangulation} is a planar map with all faces of degree $3$. A planar triangulation is \emph{3-connected} if it has no loops or multiple edges. From now on, we only consider planar triangulations that are 3-connected. A planar triangulation always has $3n$ edges and $n+2$ vertices for $n \in \mathbb{N}_+$. A vertex $v$ in a planar triangulation is \tdef{internal} if $v$ is not adjacent to the root face. A \tdef{bridgeless planar map} is a planar map without bridges, \textit{i.e.}, edges whose deletion disconnects the map. Examples of these planar maps are given in Figure~\ref{fig:ex-map}. We denote respectively by $\mathcal{T}_n$ and $\mathcal{B}_n$ the set of planar triangulations with $n$ internal vertices and the set of bridgeless planar maps with $n$ edges. 

\begin{figure}
  \begin{center}
  \includegraphics[page=1,scale=0.65]{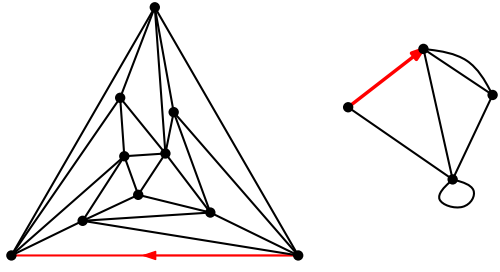}
  \end{center}
  \caption{Example of a planar triangulation and a bridgeless planar map}
  \label{fig:ex-map}
\end{figure}

To describe our bijection between $\mathcal{B}_n$ and $\mathcal{T}_n$, we need an intermediate class of objects called ``sticky trees''. Given a plane tree $S$, its \tdef{prefix order} is a total order of its nodes defined recursively: let $u$ be the root node of $S$ and $S_1, \ldots, S_k$ the sub-trees of $u$ from left to right, then the prefix order of $S$ is $u$ followed by that of $S_1$ then those of $S_2, \ldots, S_k$. The prefix order is also the order of its nodes visited in a \textbf{counter-clockwise} contour starting from its root node. The \tdef{depth} of a node in a plane tree is its distance to the root node. As a special case, the root node of a plane tree has \tdef{depth} 0. A \tdef{sticky tree} is a plane tree $S=(V,E)$ associated with a labeling function $\ell: V \to \mathbb{N}$ on its nodes satisfying the following conditions:
\begin{enumerate}
\item For a node $u$ of depth $d$, we have $0 \leq \ell(u) \leq d$.
\item For each node $u$ of depth $d > 0$, there is a node $v$ in the sub-tree rooted at $u$ such that $\ell(v) < d$ (we allow $v=u$).
\item For a node $u$ of depth $d$ and $S_u$ the sub-tree rooted at $u$, if there is a node $v$ in $S_u$ with $\ell(v)=d$ (which is the depth of $u$), then all nodes in $S_u$ (including $u$) that come before $v$ in the prefix order have a label not smaller than $d$.
\end{enumerate}

We denote by $\mathcal{S}_n$ the set of sticky trees with $n$ edges. A non-root node in a sticky tree is \tdef{primary} if its label is equal to its depth. All other non-root nodes are called \tdef{derived}. It is clear that Condition~2 only needs to be checked against primary nodes. Given a node $u$, the \tdef{certificate} of $u$ is the first node in the prefix order that makes $u$ satisfy Condition~2. Every non-root node thus has a certificate, which is itself when it is derived. We have the following lemma for the existence of primary nodes.

\begin{lem} \label{lem:primary-well-founded}
Given a node $v$ with a label $d$ in a sticky tree $S$, its ancestor $u$ of depth $d$ must be a primary node.
\end{lem}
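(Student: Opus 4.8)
The plan is to derive the conclusion directly from Conditions~1 and~3, exploiting the single structural fact that the root of any subtree is the first of its nodes in the prefix order. First I would check that the ancestor $u$ is even well-defined. Since $\ell(v)=d$, Condition~1 gives $\mathrm{depth}(v)\ge \ell(v)=d$, so $v$ does have a (unique) ancestor at depth exactly $d$, namely $u$. If $\mathrm{depth}(v)=d$ then $u=v$, and $\ell(u)=\ell(v)=d=\mathrm{depth}(u)$ says outright that $u$ is primary; so I would dispose of this degenerate case immediately and henceforth assume $v\neq u$ (and implicitly $d>0$, since for $d=0$ the ancestor is the root, to which the term ``primary'' does not apply).

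Next I would invoke Condition~3 with $u$ in the role of the distinguished node. Writing $S_u$ for the subtree rooted at $u$, the node $v$ lies in $S_u$ because $u$ is an ancestor of $v$, and it satisfies $\ell(v)=d=\mathrm{depth}(u)$. This is exactly the hypothesis of Condition~3, which then asserts that every node of $S_u$ preceding $v$ in the prefix order carries a label at least $d$.

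The decisive observation is that $u$ is one of those nodes: being the root of $S_u$, it comes before all the other nodes of $S_u$ in the prefix order, hence in particular before $v$. Condition~3 therefore yields $\ell(u)\ge d$, and combining this with the upper bound $\ell(u)\le \mathrm{depth}(u)=d$ from Condition~1 forces $\ell(u)=d=\mathrm{depth}(u)$, i.e.\ $u$ is primary.

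I do not anticipate a real obstacle, as the statement is essentially a repackaging of Condition~3 in the special case where the witnessing label $d$ coincides with the depth of the subtree's root. The only points needing care are confirming that $u$ exists (which is precisely Condition~1) and peeling off the case $v=u$; after that, the prefix-order remark that the root comes first makes the inequality $\ell(u)\ge d$ automatic, and the conclusion is forced.
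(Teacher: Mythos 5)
Your proof is correct and takes essentially the same route as the paper, which simply notes that Condition~3 applied to $u$ (since $v \in S_u$ has label $d = \mathrm{depth}(u)$, and $u$ precedes $v$ in prefix order) gives $\ell(u) \geq d$, while Condition~1 gives $\ell(u) \leq d$. Your added care about the existence of $u$, the case $v=u$, and the case $d=0$ fills in details the paper leaves implicit, but the core argument is identical.
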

\begin{proof}
Condition~3 on $u$ leads to $\ell(u) \geq d$, while Condition~1 on $u$ leads to $\ell(u) \leq d$.
\end{proof}

By applying Lemma~\ref{lem:primary-well-founded} to $v$ with a label $d$ in Condition~3, we can see that Condition~3 only needs to be checked against primary nodes as $u$.

The definition of sticky trees seems complicated, but we will see that it captures the class of labeled trees we obtain from bridgeless planar maps by an exploration process that we will now define. Readers may have already noticed that we reserve the term \emph{node} for trees and \emph{vertex} for maps to distinguish where these objects live for clarity. Our definition of sticky trees is inspired by that of decorated trees in \nonsep{}, and their relations will be discussed in Section~\ref{sec:bij-oth}.

\section{Bijections with planar maps} \label{sec:bij}

In this section, we present two bijections between sticky trees and two classes of planar maps: bridgeless planar maps and planar triangulations. Together they form a direct bijection between these two families of planar maps.

\subsection{From bridgeless planar maps to sticky trees}

We first describe an exploration process of planar maps that gives rise to a bijection between bridgeless planar maps and sticky trees. During this process, vertices in a map will be duplicated, but the number of edges will stay the same. 

Given a planar map, we can cut all its edges in the middle, and we obtain a set of vertices with attached \tdef{half-edges}. For a planar map $M$ with its root $e_r$ pointing from $v$ to $u$, we denote by $h_r$ the \tdef{root half-edge}, which is the half-edge of $e_r$ adjacent to $v$. By definition, each edge $e$ gives two half edges $h_{e,1}, h_{e,2}$, and we define the involution $\tau$ on the set of half edges such that $\tau(h_{e,1}) = h_{e,2}$ for all $e$. We define a bijection $\sigma$ on the set of half-edges such that, for any half-edge $h$, its image $\sigma(h)$ is the next half-edge sharing the same vertex as $h$ in \textbf{clockwise} order.

\begin{figure}
\begin{center}
  \begin{minipage}{0.8\textwidth}
    \begin{algorithm}[H]
      \caption{Exploration algorithm for bridgeless planar maps}
      \label{algo:dfs}
      \begin{algorithmic}
        \Function{\texttt{Explore}}{Half-edge $h$}
        \State Mark $h$ and $\tau(h)$ as visited
        \State $h' \gets \sigma(\tau(h))$
        \While{$h'$ is not visited}
        \State \texttt{Explore}(h')
        \State $h' \gets \sigma(h')$
        \EndWhile
        \EndFunction
      \end{algorithmic}
    \end{algorithm}
  \end{minipage}
  \par
\end{center}
\end{figure}

We now define an exploration algorithm of half-edges, starting from the root half-edge $h_r$ (see Figure~\ref{fig:bij-bridgeless} for an example of its execution). When we explore a half-edge $h$, we first mark $h$ and $\tau(h)$ as already visited, then we try to explore in depth-first manner the half-edges next to $\tau(h)$ in \textbf{clockwise} order, until we meet a half-edge that is already visited upon inspection, and we finish the exploration of $h$. A pseudo-code of the algorithm is provided in Algorithm~\ref{algo:dfs}.

After the exploration, we draw the exploration tree $S$, whose nodes are some of the half-edges in $M$. We will identify nodes in $S$ as ``copies'' of vertices in $M$, where a half-edge is regarded as its adjacent vertex. We now label both vertices in $M$ and nodes in $S$. For a vertex $w$ in $M$, we find the node $w'$ in $S$ corresponding to the first visit of $w$, and we label both by the depth of $w'$ in $S$. The labels of other nodes of $S$ are those of their corresponding vertices in $M$. We denote by $\mathrm{S}(M)$ the labeled tree we get after the exploration and labeling process above.

\begin{figure}
  \begin{center}
    \includegraphics[page=2,scale=0.85]{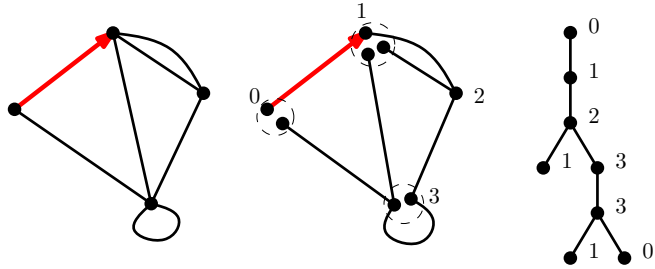}
  \end{center}
  \caption{An example of the depth-first exploration of edges on a bridgeless map}
  \label{fig:bij-bridgeless}
\end{figure}

By construction, $\mathrm{S}(M)$ is a plane tree with all nodes labeled, and it has the same number of edges as $M$. We also notice that $\mathrm{S}(M)$ is well-defined for all planar maps. We have the following observations on plane trees obtained using $\mathrm{S}$.

\begin{prop} \label{prop:b-to-s-labels}
  Let $B$ be a planar map. Then, given a node $u$ in $\mathbb{S}(B)$ with a label $d$ strictly smaller than its depth, its ancestor $u_0$ of depth $d$ must be a primary node.
\end{prop}
\begin{proof}
  We first observe that, for $v_B$ a vertex in $B$, and $v$ the node in $\mathrm{S}(B)$ corresponding to the first visit to $v_B$ in the exploration, any node in $\mathrm{S}(B)$ corresponding to a subsequent visit to $v_B$ in $B$ is a descendant of $v$. The reason is that, once the exploration process reaches the vertex $v_B$ on $B$, that is, reaching $v$ on $\mathrm{S}(B)$, it will not backtrack from $v$ until every half-edge of $v_B$ is visited. Indeed, if $h$ is a half-edge of $v_B$ that we visited, then when the exploration backtracks to $h$, the next half-edge $\sigma(h)$ is either already visited, or the exploration will continue on $\sigma(h)$.

  We now consider our claim. By definition, $u$ corresponds to a visit to a certain vertex $u_B$ in $B$, but not the first visit. Let $u_0$ be the node in $\mathbb{S}(B)$ corresponding to the first visit to $u_B$. We know that $u_0$ has the label $d$. Then by the definition of exploration, the label of $u_0$ is its depth in $\mathbb{S}(B)$, which means that $u_0$ is a primary node. By the first point, $u_0$ is also an ancestor of $u$ of depth $d$, which concludes the proof.
\end{proof}

\begin{prop} \label{prop:explore-order}
  Let $B$ be a planar map and $u_1, u_2$ two nodes in $\mathbb{S}(B)$ with the same label $d$, with $u_1$ coming before $u_2$ in the prefix order. Suppose that $u$ is the corresponding vertex in $B$, and $h_1$, $h_2$ the half-edges of $u$ leading to visits corresponding to $u_1, u_2$. Now let $h_0$ be the first half-edge of $u$ visited in the exploration. Then, in the clockwise order starting from $h_0$, we also have $h_1$ coming before $h_2$.
\end{prop}
\begin{proof}
  Suppose that it is not the case, that is, $h_2$ comes before $h_1$ in clockwise order starting from $h_0$. Since $u_1$ comes before $u_2$ in the prefix order, either $u_1$ is an ancestor of $u_2$ in $\mathbb{S}(B)$, or $u_2$ is not an ancestor of $u_1$ in $\mathbb{S}(B)$, but occurs before $u_1$ in the exploration.

  For the case of $u_1$ being an ancestor of $u_2$, the path from $u_1$ to $u_2$ on the tree corresponds to a cycle $C$ on $B$, starting from some half-edge $h_3$ explored in the exploration of $h_1$ and ending at $h_2$. Now, by the exploration process, $h_3$ must come after $h_1$, meaning that we have $h_0, h_2, h_1, h_3$ in clockwise order. Therefore, $C$ separates $h_1$ and $h_0$ by planarity. Hence, the exploration starting from $h_0$ cannot reach $h_1$ before visiting some half-edge on $C$, which would lead to a visit of $h_3$ before that of $h_1$, which is impossible.
  
  For the other case, let $v$ be the lowest common ancestor of $u_1$ and $u_2$, with $h^v_0$ the half-edge that links $v$ to its parent in the exploration, and $h^v_1$, $h^v_2$ the half-edges that leads to $u_1$ and $u_2$ respectively in the exploration. By the exploration process, we must have $h^v_0, h^v_2, h^v_1$ in clockwise order, since $u_1$ comes before $u_2$ in prefix order. Therefore, the cycle $C$ on $B$ corresponding to the path from $u_0$ to $u_2$ via $v$ separates $h^v_1$ and $h_1$. By the same argument as in the previous case, it is impossible to have a path from $h^v_1$ to $h_1$ in the exploration, leading to a contradiction. The same reasoning also holds when $v=u$. We thus conclude the proof.
\end{proof}

When the map is bridgeless, its image by $\mathrm{S}$ has the following property.

\begin{prop} \label{prop:b-to-s}
If $B \in \mathcal{B}_n$ is a bridgeless planar map, then $\mathrm{S}(B)$ is a sticky tree in $\mathcal{S}_n$.
\end{prop}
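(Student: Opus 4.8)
The plan is to reinterpret $\mathrm{S}(B)$ as a ``cut tree'' of $B$ and then verify Conditions 1--3 from invariants of the exploration. Since each call to \textsc{Explore} marks exactly one fresh edge (both of its half-edges), the algorithm traverses every edge of $B$ exactly once; reading each traversal as the creation of a new node identified with the newly reached vertex, $\mathrm{S}(B)$ has $n$ edges and $n+1$ nodes, and $B$ is recovered by gluing all nodes that are copies of a common vertex. In this language the labelling rule reads: the label of a copy of a vertex $w$ is the depth of the first copy of $w$ in prefix (depth-first) order, so a node is primary exactly when it is the first visit of its vertex, and a certificate records where a subtree first ``reconnects'' upward. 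I would carry out the verification vertex by vertex, tracking, for each vertex $w$, the clockwise arc of edges around $w$ for which each copy of $w$ is responsible.

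First I would prove the monotonicity lemma that drives Condition 1: among all copies of a vertex $w$, the first visit has the smallest depth, so that $0 \le \ell(u) \le \mathrm{depth}(u)$ for every node $u$ (the lower bound being immediate since labels are depths). The point is that, by the ``stop at the first visited half-edge'' rule together with clockwise order, when we first enter $w$ we explore a maximal clockwise arc of not-yet-traversed edges, and any later copy of $w$ is created only when an edge in a \emph{remaining} arc is traversed inward, necessarily from deeper in the current recursion; planarity guarantees that these arcs are consecutive and non-crossing, so no later copy can undercut the depth of the first. I would make this precise by an induction on the steps of the exploration, maintaining as invariant that the first-visit nodes form a genuine depth-first spanning tree of $B$ onto which all duplicate copies hang at strictly greater depth.

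It remains to handle Conditions 2 and 3, where bridgelessness and planarity respectively enter. For Condition 2 I would argue contrapositively: if for some node $u$ of depth $d>0$ every node of its subtree $S_u$ had label $\ge d$, then, using Condition 1, the vertices whose first visit lies in $S_u$ would meet the rest of $B$ only through the edge carried by the tree edge above $u$, making that edge a bridge and contradicting $B \in \mathcal{B}_n$; hence some copy in $S_u$ has label $<d$. For Condition 3 I would first invoke Lemma~\ref{lem:primary-well-founded} to see that a node $v \in S_u$ with $\ell(v)=d=\mathrm{depth}(u)$ must be a further copy of the \emph{same} vertex as the (primary) node $u$, so the condition becomes: before the exploration returns to $u$'s vertex inside $S_u$, every vertex it meets was first visited at depth $\ge d$. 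I expect this last step to be the main obstacle: it is exactly the statement that the clockwise exploration started at $u$ fills out the nested region ``below'' $u$ and cannot reach a vertex first visited strictly above $u$ until it comes back to $u$'s vertex, which has to be extracted carefully from the non-crossing (planar) structure of the arcs around each vertex and the order in which $\sigma$ and $\tau$ drive the recursion. Establishing that planar non-crossing invariant cleanly, and reconciling it with the first-visit minimality behind Condition 1, is where the real work lies; Conditions 1 and 2 should then follow with comparatively routine bookkeeping.
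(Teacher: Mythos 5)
Your overall strategy is the same as the paper's: you verify the three conditions against invariants of the exploration, and your Condition~2 argument (if every label in $S_u$ were at least $d$, the edge above $u$ would be a bridge) is essentially identical to the paper's. The problem is Condition~3, which is where the entire substance of the proposition lies: you do not prove it. You correctly reduce it (via Lemma~\ref{lem:primary-well-founded}) to the statement that the exploration launched below a primary node $u$ cannot reach a vertex first visited at depth $<d$ before returning to $u$'s vertex, and then you explicitly defer: ``establishing that planar non-crossing invariant cleanly \ldots{} is where the real work lies.'' A proof attempt that names the key planarity step and stops there has a genuine gap, because Conditions~1 and~2 are comparatively routine (the paper dismisses Condition~1 in one line) while Condition~3 is exactly the clause that encodes planarity of the map in the tree.

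For the record, the paper's mechanism for the missing step is concrete: given descendants $v_1$ preceding $v_2$ of $u$ with $\ell(v_2)=d$, take the lowest common ancestor $v$ of $v_1,v_2$ and the label-$d$ nodes $u_1,u_2$ immediately before and after $v$ in prefix order; since label-$d$ descendants of $u$ are copies of the vertex $u_B$, the tree path $P'$ from $u_1$ to $u_2$ lifts to a closed walk $P'_B$ in $B$ based at $u_B$. The pivotal point is the clockwise order of the extreme half-edges $e_1,e_2$ of $P'_B$ around $u_B$ (measured from the first-arrival edge $e_u$): if $e_2$ preceded $e_1$, the walk $P'_B$ would cross the already-visited exploration path, contradicting planarity; hence $e_2$ follows $e_1$, the walk clockwise encloses a region $D$ away from $e_u$ and the ancestors of $u$, and $v_1$, visited between $u_1$ and $u_2$, is trapped in $D$, forcing $\ell(v_1)\geq d$. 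Note also that this same enclosure phenomenon is what your sketch silently leans on in two other places: (i) your identification of a label-$d$ descendant of $u$ with a further copy of $u$'s vertex needs the invariant that later copies of a vertex are tree-descendants of its first copy, which you assert but only propose to prove ``by an induction on the steps''; and (ii) your Condition~2 argument tacitly assumes that a vertex with all its copies in $S_u$ labeled $\geq d$ is not also visited outside $S_u$. So the one missing idea --- the Jordan-curve/enclosure argument for the closed walks between consecutive returns to a primary vertex --- is the load-bearing step in all three places where your proposal defers the work, and without it the proof is not complete.
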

\begin{proof}
  We only need to prove that $\mathrm{S}(B)$ satisfies the conditions of sticky trees. Condition~1 comes directly from the definition of $\mathrm{S}(B)$. We now deal with Conditions 2 and 3.

  For Condition~2, we suppose that Condition~2 does not hold for some node $w$ with a label $d$ in $\mathrm{S}(B)$, that is, every descendant of $w$ in $\mathrm{S}(B)$ has a label at least $d$. We can suppose that $w$ is primary, since derived nodes satisfy Condition~2 automatically. Let $w_B$ be the vertex that corresponds to $w$ in $B$, $e_B$ the edge from which $w_B$ was first visited in the exploration, and $S_w$ the sub-tree of $\mathrm{S}(B)$ rooted at $w$. We consider the set of vertices $V_w$ of $B$ that are first visited during the construction of $S_w$, which corresponds to primary nodes in $S_w$. For derived nodes, let $u$ be a derived node with label $d'$, and we have $d' \geq d$. Then by Proposition~\ref{prop:b-to-s-labels}, its corresponding primary node is the ancestor of depth $d' \geq d$, which is still a primary node in $S_w$. Therefore, $u$ corresponds to a visit to some vertex in $V_w$. Therefore, by the completeness of exploration as explained in the proof of the first point of Proposition~\ref{prop:b-to-s-labels}, the only edge adjacent to a vertex in $V_w$ that links to a vertex not in $V_w$ is $e_B$, which means $e_B$ is a bridge, contradiction $B \in \mathcal{B}_n$. We conclude that Condition~2 is satisfied for $\mathrm{S}(B)$.

  For Condition~3, for $u$ a node in $\mathrm{S}(B)$ of depth $d$ and $v_1, v_2$ two descendants of $u$ such that $v_1$ comes before $v_2$ in the prefix order and $\ell(v_2)=d$, we need to prove that $\ell(v_1) \geq d$. By Proposition~\ref{prop:b-to-s-labels}, $\ell(v_2)=d$ implies that $u$ is primary, and we denote by $u_B$ the vertex in $B$ corresponding to $u$. We now track several related objects on $B$ and $\mathrm{S}(B)$, which are illustrated in Figure~\ref{fig:m-to-s-cond3}. Let $S_u$ be the sub-tree of $\mathrm{S}(B)$ rooted at $u$, and $h_u$ the half-edge that leads the first time to $u_B$ on $B$ in the exploration. We order edges around $u_B$ clockwise, starting from $h_u$. Let $v$ be the lowest common ancestor of $v_1$ and $v_2$, and $P$ the path from $u$ to $v_2$ on $\mathrm{S}(B)$. The node $v$ is clearly on the path $P$. Let $u_1$ (resp. $u_2$) be the node with label $d$ that comes just before $v$ (resp. after $v$) in the prefix order. Let $P'$ be the path from $u_1$ to $u_2$ in $\mathrm{S}(B)$, and $P'_B$ its counterpart in $B$. We notice that $P'$ may not be always descending in terms of depth. Let $h_1$ (resp. $h_2$) be the half-edge on $B$ corresponding to the first (resp. the last) half-edge of $P'_B$ (again, see Figure~\ref{fig:m-to-s-cond3}). We now consider the cycle $C$ in $B$ corresponding to the path from $u$ to $u_2$. Since $u_1$ comes before $u_2$ in the prefix order, by Proposition~\ref{prop:explore-order}, $h_2$ comes after $h_1$ in clockwise order around $u_B$, and $P'_B$ clockwise encloses a region $D$ in $B$ away from $h_u$. Again by the exploration process, since $v_1$ comes before $v_2$, it must be enclosed in $D$ away from ancestors of $u$ (see the right side of Figure~\ref{fig:m-to-s-cond3}). Therefore, $\ell(v_1) \geq d$, and Condition~3 is satisfied.
\end{proof}

\begin{figure}
  \begin{center}
    \includegraphics[scale=0.85,page=8]{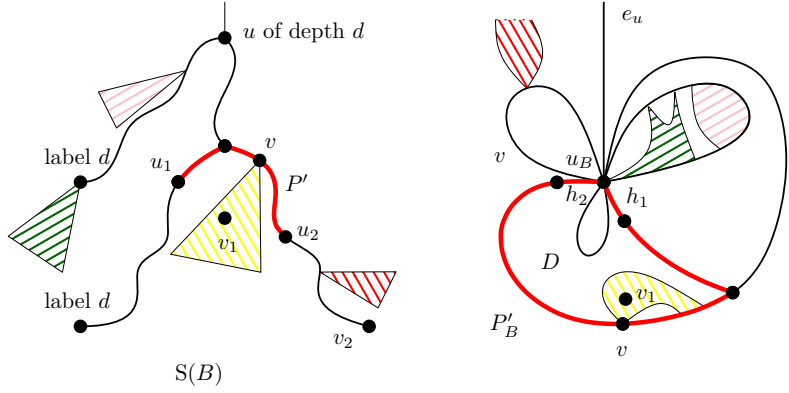}
  \end{center}
  \caption{The path between a node and its descendant with the same label on $\mathrm{S}(B)$}
  \label{fig:m-to-s-cond3}
\end{figure}

From the proof above, we can see how the three conditions of sticky trees characterize an exploration tree of bridgeless planar maps. Condition~1 ensures that a sticky tree is an exploration tree of a certain map, while Condition~2 ensures that the map is bridgeless, and Condition~3 ensures that the map is planar.

We now describe a procedure $\mathrm{R}$ that converts a sticky tree into a bridgeless planar map. An example can be found in Figure~\ref{fig:bij-sticky}. Given a sticky tree $S$, we put a plug on the left of each node. Then for a node $u$ with a label $d$ that is not derived, we glue in clockwise fashion the plugs of $u$ and descendants of $u$ with a label $d$ in prefix order to form a vertex. We notice that the positions of plugs make all descending half-edges of a node $u$ follow the ascending half-edge that links $u$ to its parent in clockwise order around the vertex obtained after gluing. By Lemma~\ref{lem:primary-well-founded}, each derived node has a corresponding primary node to glue. After gluing all primary nodes in increasing order of labels, we obtain a map denoted by $\mathrm{R}(S)$ with no plug left, rooted at the right-most edge of the root node, pointing away from the root node. The process fails if all plugs corresponding to a primary node are not in the same face at any point of the process. The same gluing process is applied to all primary nodes in increasing order of depth.

\begin{figure}
  \begin{center}
    \includegraphics[page=3,scale=0.85]{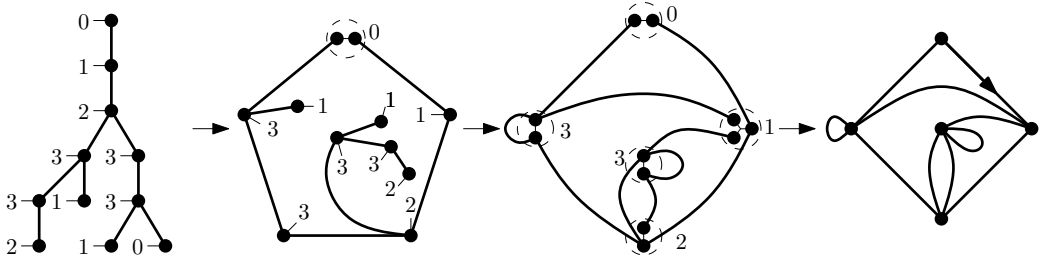}
  \end{center}
  \caption{Node-gluing procedure to get a bridgeless planar map from a sticky tree}
  \label{fig:bij-sticky}
\end{figure}

By construction, if succeeded, $\mathrm{R}(S)$ is a planar map. We have the following result.

\begin{prop} \label{prop:s-to-b}
For $S$ a sticky tree in $\mathcal{S}_n$, the planar map $\mathrm{R}(S)$ is always defined and is in $\mathcal{B}_n$.
\end{prop}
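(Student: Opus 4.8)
The statement packages three assertions: that the gluing procedure never fails (so $\mathrm{R}(S)$ is well-defined), that $\mathrm{R}(S)$ is a planar map with $n$ edges, and that it is bridgeless. The last two are comparatively light once well-definedness is in hand. The edge count is immediate, since gluing only identifies nodes into vertices and never creates or destroys an edge, so $\mathrm{R}(S)$ has exactly the $n$ edges of $S$; planarity holds because each gluing step merges plugs that all lie on a single face of the current map, and identifying boundary points of a face in their cyclic order keeps the object planar. So the plan is to spend the real effort on well-definedness, and then read off bridgelessness from Condition~2.

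For well-definedness I would argue by induction on the gluing steps, performed in increasing order of the common depth $d$ of the group being glued (recall that a primary node has label equal to its depth, so the ``increasing label'' and ``increasing depth'' orders coincide). Fix a primary or root node $u$ of depth $d$ and let $w_1, \dots, w_k$ be, in prefix order, the descendants of $u$ with label $d$; by Lemma~\ref{lem:primary-well-founded} these are exactly the derived nodes whose group leader is $u$, so the partition of nodes into groups is precisely the partition into vertices of $\mathrm{R}(S)$. The key structural input is Condition~3: applied to each $w_i$, it forces every node strictly between $u$ and $w_i$ in the prefix order of $S_u$ to have label $\ge d$, and since the only label-$d$ descendants of $u$ are the $w_j$ themselves, every node strictly between two consecutive group members has label $>d$. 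Hence the subtrees hanging between consecutive plugs of the group consist only of nodes in strictly deeper groups, which have not yet been glued at step $d$. The inductive invariant I would maintain is therefore: after gluing all groups of depth $<d$, the partial object is a planar map with dangling plugs, and the plugs of every not-yet-glued group occur consecutively, in the prescribed clockwise order, along a single face. Under this invariant the plugs of $u$'s group are co-facial, so the clockwise gluing succeeds and yields again a planar map, and I would then check that the invariant is restored for depth $d+1$.

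The delicate point, and what I expect to be the main obstacle, is exactly this preservation of the invariant: one must show that folding the group of $u$ into a single vertex does not separate the plugs of any deeper group onto distinct faces, and that the cyclic order of the surviving plugs stays compatible with the clockwise prescription. This is where Condition~3 does its real work, just as it guaranteed planarity in the proof of Proposition~\ref{prop:b-to-s}; I would track the nested regions cut out by the glued group (the analogues of the region $D$ in Figure~\ref{fig:m-to-s-cond3}) and argue that each deeper group stays enclosed in a single such region. Finally, for bridgelessness, consider any edge of $\mathrm{R}(S)$; it arises from a tree edge joining a node $u$ of depth $d>0$ to its parent. Condition~2 supplies a node $v$ in $S_u$ with $\ell(v)<d$, and by Lemma~\ref{lem:primary-well-founded} $v$ is glued into the group of its depth-$\ell(v)$ primary ancestor $p$, which is a strict ancestor of $u$. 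Thus $v$ is identified with a vertex on the parent side of the edge, so the tree path from $p$ down to $u$ (which traverses the edge) together with the tree path inside $S_u$ from $u$ to $v$ closes up into a cycle through the edge. Every edge therefore lies on a cycle and cannot be a bridge, giving $\mathrm{R}(S)\in\mathcal{B}_n$.
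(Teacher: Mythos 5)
Your edge count, planarity-by-construction, and bridgelessness arguments are sound. In fact your bridgeless step (every edge of $\mathrm{R}(S)$ lies on a cycle: Condition~2 gives $v$ in $S_u$ with $\ell(v)<d$, Lemma~\ref{lem:primary-well-founded} glues $v$ into the group of a strict ancestor $p$ of $u$, and the two tree paths close up through the edge) is a uniform and more explicit version of the paper's terse remark that a bridge would have to join a primary node to its parent in violation of Condition~2; it also handles derived $u$ cleanly via the case $v=u$.

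The genuine gap is in well-definedness, and you flag it yourself: the preservation of your inductive invariant --- that gluing the depth-$d$ group never separates the plugs of a deeper group onto distinct faces, nor disturbs their cyclic order --- is announced (``I would track the nested regions \dots and argue \dots'') but never carried out, and that step \emph{is} the content of the proposition. The paper closes exactly this step, but by a localized contradiction rather than a global invariant, and without any nested-region bookkeeping: suppose the gluing fails at a primary node $u$ with label $d$, and let $v$ be the previously glued primary node whose gluing cut $u$'s plugs apart. A gluing at a non-ancestor of $u$ cannot affect the face carrying the plugs of $S_u$, so $v$ is a strict ancestor of $u$ with label $d'<d$; and for the identification of $v$'s group to pinch between two plugs of $u$'s group, some member of $v$'s group --- a node labeled $d'$ --- must lie inside $S_u$ and precede, in prefix order, a node of $S_u$ labeled $d$. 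This directly contradicts Condition~3 applied at $u$, which forces every node of $S_u$ preceding a label-$d$ node to carry a label at least $d>d'$. So the tool you named (Condition~3) is the right one, but the way it is deployed is not the region-tracking of Proposition~\ref{prop:b-to-s} that you propose to imitate; a single prefix-order witness suffices. One further caution about your invariant as stated: the plugs of a not-yet-glued group are generally \emph{not} consecutive along the face boundary, since plugs of strictly deeper groups hang in the subtrees between consecutive members; what is actually needed, and what the contradiction argument supplies, is only cofaciality in the correct cyclic order.
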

\begin{proof}
  The number of edges is clearly preserved. We first prove that the gluing process never fails. Suppose that a failure occurs at the primary node $u$ with a label $d$, after gluing another primary node $v$ with a label $d' \leq d$. If $v$ is not an ancestor of $u$, then the gluing of $v$ cannot affect that of $u$. Therefore, $v$ must be an ancestor of $u$, and $d' < d$. In this case, for plugs gluing to $u$ to be cut apart, there must be a node $v'$ labeled by $d'$ in the sub-tree $S_u$ rooted by $u$, preceding a node $u'$ with a label $d$ in $S_u$ in prefix order, which violates Condition~3. Therefore, $\mathrm{R}(S)$ is always defined. If there is a bridge $e$ in $\mathrm{R}(S)$, then $e$ must link a primary vertex $u$ in $S$ to its parent, and $u$ must violates Condition~2, which is a contradiction. Therefore, $\mathrm{R}(S)$ is bridgeless.
\end{proof}

With some detailed analysis, we can prove that $\mathrm{R}$ is in fact the inverse of $\mathrm{S}$, which makes them both bijections.

\begin{thm} \label{thm:b-s-bij}
For all $n \geq 1$, the transformation $\mathrm{S}$ is a bijection from $\mathcal{B}_n$ to $\mathcal{S}_n$, and $\mathrm{R}$ is its inverse.
\end{thm}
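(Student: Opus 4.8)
The plan is to leverage that, by Propositions~\ref{prop:b-to-s} and~\ref{prop:s-to-b}, both $\mathrm{S}\colon \mathcal{B}_n \to \mathcal{S}_n$ and $\mathrm{R}\colon \mathcal{S}_n \to \mathcal{B}_n$ are already known to be well-defined maps. It therefore suffices to prove that they are mutually inverse, i.e. that $\mathrm{R}(\mathrm{S}(B)) = B$ for every $B \in \mathcal{B}_n$ and $\mathrm{S}(\mathrm{R}(S)) = S$ for every $S \in \mathcal{S}_n$. Establishing both round-trips makes each map a bijection with the other as inverse, with no appeal to the (not yet known) cardinalities of $\mathcal{B}_n$ and $\mathcal{S}_n$. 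I would verify the two round-trips separately, in each case keeping careful track of the half-edges of the map together with the permutations $\sigma$ and $\tau$, since the whole construction is really a statement about how half-edges are cyclically organized around vertices.

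For the round-trip $\mathrm{R}\circ\mathrm{S}$, I would fix $B$, run the exploration, and first pin down the correspondence between the half-edges of $B$ and the data of $\mathrm{S}(B)$: every edge of $B$ is traversed exactly once and becomes a tree edge, each recursive call creates one node identified with the half-edge at which the exploration proceeds, the first time a vertex $w$ is reached gives its primary copy (carrying $w$'s label), and each later arrival at $w$ along a fresh edge gives a derived copy. I would then show that the plug-gluing of $\mathrm{R}$ re-identifies exactly the copies the DFS split apart: a derived copy of $w$ is labeled by the depth $d$ of the primary copy of $w$, hence is glued back to that primary node, and by Condition~3 it sits in the prefix-order window of the primary node's subtree prescribed by the gluing rule. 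The decisive point is that gluing plugs in clockwise prefix order restores the cyclic order $\sigma$ around the reconstituted vertex $w$, because the DFS sweeps the half-edges around $w$ in precisely this clockwise order via $\sigma$. Once the vertex identifications and all local cyclic orders agree, the rooting conventions match as well, giving $\mathrm{R}(\mathrm{S}(B)) = B$.

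For the round-trip $\mathrm{S}\circ\mathrm{R}$, I would fix a sticky tree $S$, build $B' = \mathrm{R}(S)$, and run the exploration on $B'$, arguing in the reverse direction. By construction of $\mathrm{R}$, the clockwise order of half-edges around each glued vertex of $B'$ is exactly the order in which the exploration, following $\sigma$ after each $\tau$-step and stopping at already-visited half-edges, enumerates them. Consequently the exploration of $B'$ retraces $S$ edge by edge, the primary nodes reappear as the first visits and the derived nodes as the later arrivals at already-discovered vertices, and the first-visit depths it assigns reproduce exactly the labels $\ell$ of $S$. This yields $\mathrm{S}(\mathrm{R}(S)) = S$.

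The main obstacle, common to both directions, is proving the faithful preservation of the clockwise cyclic order of half-edges around each vertex, and this is exactly where Conditions~2 and~3 are used, as anticipated in the remark following Proposition~\ref{prop:b-to-s}. Condition~3 fixes the prefix-order positions of the same-label descendants so that the gluing window of $\mathrm{R}$ coincides with the traversal window of the DFS, while Condition~2 (no bridges) guarantees that each subtree is genuinely closed off by its leading edge, so that the recursive structures of the exploration and of the gluing align step for step. The remaining work is the tedious but routine bookkeeping of matching individual plugs to half-edges and checking that depths and labels are transported consistently through both operations.
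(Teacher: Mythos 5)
Your skeleton is the same as the paper's: by Propositions~\ref{prop:b-to-s} and~\ref{prop:s-to-b} both maps are well defined, so it suffices to verify the two round-trips $\mathrm{S}(\mathrm{R}(S))=S$ and $\mathrm{R}(\mathrm{S}(B))=B$, with no counting argument. The gap is that your two central claims are asserted rather than proved, and they are exactly where the paper spends all its effort. For $\mathrm{S}(\mathrm{R}(S))=S$, your sentence ``by construction of $\mathrm{R}$, the clockwise order of half-edges around each glued vertex of $B'$ is exactly the order in which the exploration enumerates them'' \emph{is} the statement to be proven, not a consequence of the construction. The paper proves it by induction over the intermediate maps of the gluing process: it compares the explorations of $M_1$ (before gluing one primary node) and $M_2$ (after gluing it into $u_*$ with half-edge order $e_0,L_0,e_1,L_1,\ldots,e_k,L_k$), supposes they differ, takes the \emph{last common half-edge} $e_*$, shows $e_*=e_i$ for some $i$, and derives contradictions through a case analysis ($L_i$ empty or not, $i=k$ or not). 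The delicate case your sketch does not account for is $L_i$ empty: there the exploration of $M_1$ backtracks from a pendant node $u_i$ while in $M_2$ the sweep at $u_*$ continues to $e_{i+1}$, and one must verify, using that $e_{i+1}$ was already visited (because $u_{i+1}$ is not a descendant of $u_i$), that both explorations re-synchronize at the same half-edge. Without this (or an equivalent) argument, the claim that the DFS on $\mathrm{R}(S)$ ``retraces $S$ edge by edge'' is unsupported.

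The other direction has an analogous issue plus one omission. The paper does not merely observe that the gluing order matches the DFS sweep; it proves a \emph{uniqueness} statement: any gluing preserving the exploration tree must order the half-edges as $e_0,L_0,e_{i_1},L_{i_1},\ldots,e_{i_k},L_{i_k}$, and planarity then forces $i_j=j$, so the rotation of $\mathrm{R}(\mathrm{S}(B))$ at each vertex coincides with that of $B$. Moreover, your step ``a derived copy of $w$ is labeled by the depth $d$ of the primary copy of $w$, hence is glued back to that primary node'' silently needs that \emph{every} descendant of a primary node $u$ of depth $d$ carrying label $d$ is a copy of the same vertex of $B$ as $u$; this requires that the primary copy of a vertex is an ancestor of each of its derived copies, combined with Lemma~\ref{lem:primary-well-founded} to identify the unique depth-$d$ ancestor --- otherwise $\mathrm{R}$ could merge distinct vertices of $B$. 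Finally, a small misattribution: in the paper, Conditions~2 and~3 are consumed in Propositions~\ref{prop:b-to-s} and~\ref{prop:s-to-b}; the round-trip proof itself is a pure exploration-invariance argument and barely invokes them, contrary to the role you assign them. In short, your plan names the right invariant (DFS sweep order equals gluing order), but the part you defer as ``tedious but routine bookkeeping'' is the actual mathematical content of Theorem~\ref{thm:b-s-bij}.
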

\begin{proof}
  Since the size parameter is clearly conserved by the two bijections, we only need to prove that $\mathrm{S}(\mathrm{R}(S))=S$ for every sticky tree $S$, and $\mathrm{R}(\mathrm{S}(B))=B$ for every bridgeless planar map $B$.
  
  We first prove that $\mathrm{S}(\mathrm{R}(S))=S$. We observe that the edge exploration can be done on arbitrary planar maps, thus we only need to prove that the tree structure of the exploration tree is preserved in the intermediate planar maps after each plug-gluing step.

  We start by laying out some definitions. Without ambiguity, we refer to corresponding edges in all intermediate maps by the same name. Let $M_1$ be an intermediate map in the construction of $\mathrm{R}(S)$, and $u_0$ a primary node or the root in $S$ of depth $d$ that has not yet been glued up. As illustrated in Figure~\ref{fig:bij-glue}, we denote by $u_1, u_2, \ldots, u_k$ the descendants of $u$ with a label $d$ in the prefix order, by $h_0, \ldots, h_k$ the half-edges linking each $u_i$ to its parent in $S$, and by $L_0, \ldots, L_k$ the lists of descendant half-edges of each $u_i$ \emph{from right to left}. Here, we temporarily suspend the case where $u_0$ is the root vertex. Let $M_2$ be the map obtained by gluing the plugs of $u_0, u_1, \ldots, u_k$ into a vertex $u_*$. Since a node always have its plug preceding its descendants, the half-edges around $u_*$ in clockwise order are $h_0, L_0, h_1, L_1, \ldots, h_k, L_k$. Since other vertices are not altered, the first visit of $u_*$ still comes by $h_0$.

  Suppose that the explorations on $M_1$ and $M_2$ act differently, with the last common half-edge $h_*$, which must be marked visited during $\mathrm{Explore}(\tau(h_*))$, otherwise $\tau(h_*)$ would also be common half-edge. Since $M_1$ and $M_2$ only differ on the glued vertex $u_*$, the half-edge $h_*$ must be adjacent to $u_*$ on $M_2$. Since the explorations of $M_1$ and $M_2$ agree up to $h_*$, we must have $h_* = h_i$ for a certain $i$. If $L_i$ is not empty, then the next half-edge to visit after $h_*$ on both $M_1$ and $M_2$ will be the same, which is the first half-edge of $L_i$, but this situation violates the maximality of $h_*$. Therefore, $L_i$ is empty, and the exploration in $M_1$ will backtrack. Now, if $i=k$, since $h_0$ must have been visited, the exploration in $M_2$ will backtrack too, and ending in the same half-edge as in $M_1$. Therefore, $i \neq k$. Since $L_i$ is empty, $u_i$ has no descendant in $S$, and $u_{i+1}$ is not a descendant of $u_i$ in $S$, which means that $h_{i+1}$ is visited before $h_*=h_i$ in $M_1$, and equally in $M_2$. The exploration in $M_2$ after visiting $h_*$ must backtrack, and since the explored edges are the same in both $M_1$ and $M_2$, both explorations will backtrack to the same half-edge. Thus, the next edge visited in both $M_1$ and $M_2$ will be the same, which violates again the maximality of $h_*$. Therefore, $M_1$ and $M_2$ have the same explorations on edges, meaning that the plug-gluing of a primary node preserves the tree structure of the exploration tree, leading to $\mathrm{S}(\mathrm{R}(S))$ sharing the same tree structure with $S$. We notice that the former argument also works for the root node, since the role of $h_0$ is to fix the first half-edge to visit, and we know that for the root node. For labels, we observe simply that all primary nodes of $\mathrm{S}(\mathrm{R}(S))$ receives the same label as in $S$, and all derived nodes are thus also correctly labeled by the constructions. We conclude that $\mathrm{S}(\mathrm{R}(S))=S$.

\begin{figure}
  \begin{center}
    \includegraphics[page=9,scale=0.85]{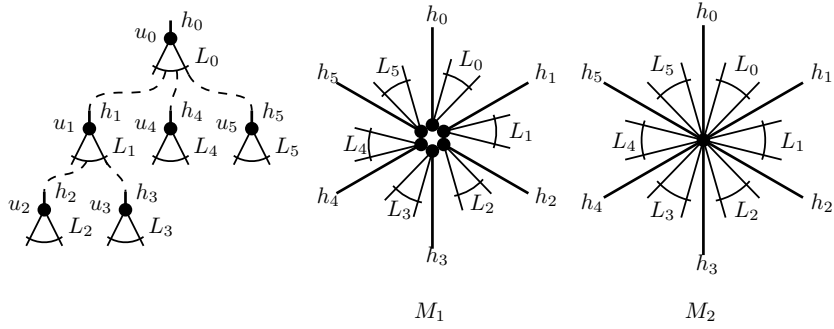}
  \end{center}
  \caption{Example of half-edges around a glued vertex $u_*$}
  \label{fig:bij-glue}
\end{figure}
  
  We now prove that $\mathrm{R}(\mathrm{S}(B)) = B$ for every bridgeless map $B$. We first observe that $\mathrm{R}(\mathrm{S}(B))$ has the same underlying graph as $B$. Let $u_0$ be a primary node or the root node of $\mathrm{S}(B)$ of depth $d$. We now prove that there is only one way to glue $u_0$ with its descendants labeled by $d$ such that the result is still planar with the same exploration tree. Let $u_1, \ldots, u_k$ be the descendants of $u$ with a label $d$. As before (see Figure~\ref{fig:bij-glue}), for $0 \leq i \leq k$, we denote by $h_i$ the half-edge of $u_i$ leading to its parent, and by $L_i$ the list of descendant half-edges of $u_i$ \emph{from right to left}. Again, the case where $u_0$ is the root node can be treated similarly by the same argument as before. Let $u_*$ be the vertex after gluing all $u_i$. We give half-edges adjacent to $u_*$ an order that starts from $h_0$ and goes in clockwise direction. From the exploration process, to preserve the exploration tree after gluing, half-edges in $L_i$ must follow $h_i$ around $u_*$ in the order of $L_i$. Therefore, the order of half-edges around $u_*$ must take the form $h_0, L_0, h_{i_1}, L_{i_1}, \ldots, h_{i_k}, L_{i_k}$. It is then clear that the only such order that does not introduce any crossing is $h_0, L_0, h_1, L_1, \ldots, h_k, L_k$. Therefore, by the uniqueness of gluing, the order of half-edges around each vertex of $\mathrm{R}(\mathrm{S}(B))$ is the same as $B$, and we have $\mathrm{R}(\mathrm{S}(B))=B$.
\end{proof}

\subsection{From planar triangulations to sticky trees} \label{sec:bij-tri}

\begin{figure}
  \begin{center}
    \includegraphics[page=4,scale=0.75]{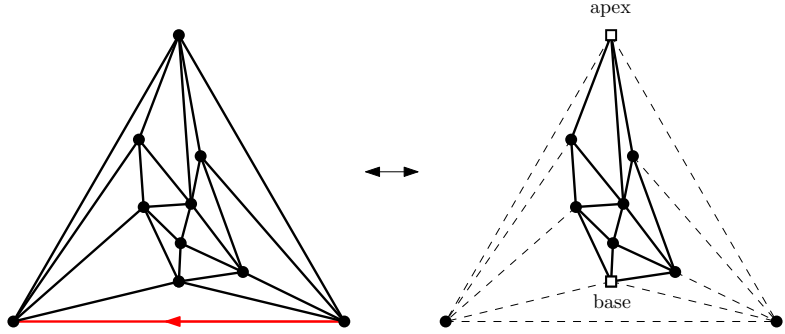}
  \end{center}
  \caption{A triangulation and its core}
  \label{fig:tri-core}
\end{figure}

We now present a bijection between planar triangulations and sticky trees. The \tdef{core} $C(T)$ of a triangulation $T$ rooted at $e=(u,v)$ is obtained by deleting $u,v$ and all their adjacent edges from $T$. Figure~\ref{fig:tri-core} shows an example. The \tdef{apex} of $C(T)$ is the remaining vertex of the outer face, and the \tdef{base} is the remaining vertex of the triangle on the right of the root. The \tdef{left (resp. right) boundary} of $C(T)$ is the leftmost (resp. rightmost) path from the apex to the base (inclusive). Since $T$ has no double edges, the two boundaries are simple, \textit{i.e.}, without repeated vertices. However, the two boundaries may share some vertices. To recover $T$ from $C(T)$, we put two vertices $u,v$ and link all vertices on the left (resp. right) boundary to $u$ (resp. $v$), then draw from $v$ to $u$ the root $e$ such that the apex is outside. For $T \in \mathcal{T}_n$, its core $C(T)$ has $n+1$ vertices.

\begin{figure}
  \begin{center}
    \includegraphics[page=5,scale=0.8]{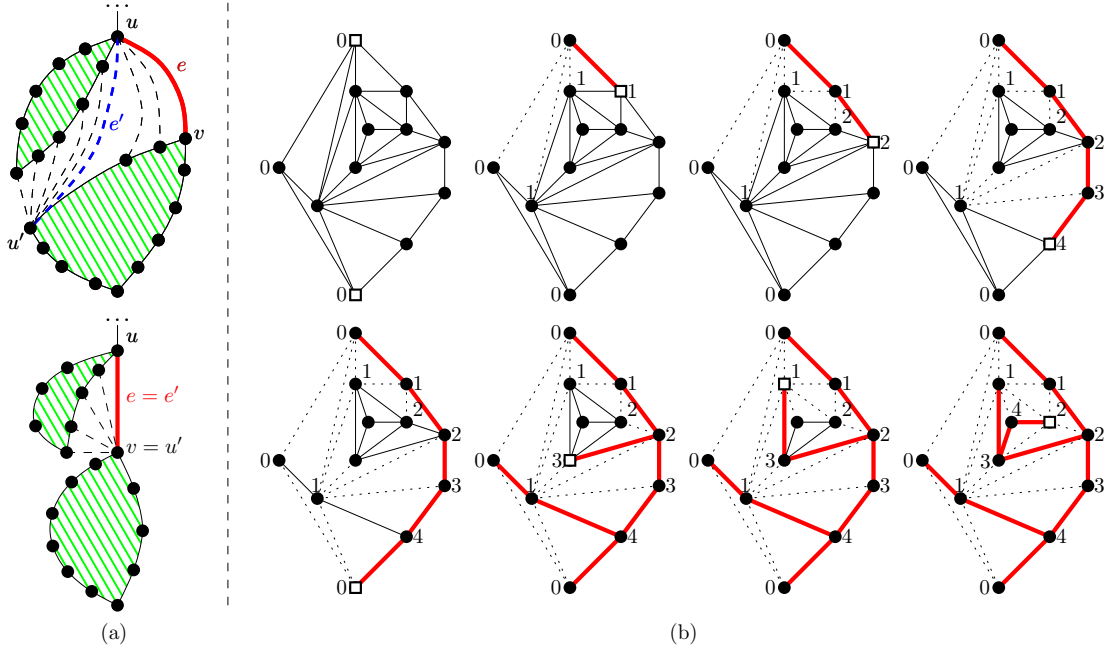}
  \end{center}
  \caption{(a) Unlabeled (above) and labeled (below) cases of edge deletion, where dashed lines are edges deleted after visiting $e$ (b) Example of exploration on the core of a planar triangulation}
  \label{fig:edge-delete}
\end{figure}

We now describe a clockwise depth-first exploration of vertices with edge deletion on the core $C(T)$ of a planar triangulation $T$ to obtain a labeled tree. We first label all vertices on the left boundary by $0$ (including apex and base). Then we start the exploration from the apex by its adjacent edge on the right boundary. Suppose that we arrive at a vertex $u$ of depth $d-1$ in the exploration, and we are instructed to start from a certain edge $e_0$ adjacent to $u$. We start by exploring edges adjacent to $u$ in clockwise order, starting from $e_0$. Let $e$ be such an edge explored, which links $u$ to a new vertex $v$. Upon exploration of $e$, we delete some edges in the way illustrated in Figure~\ref{fig:edge-delete}(a). Here is a detailed description. If $v$ has no label yet, we label it by $d$, and we delete consecutive edges adjacent to $u$ that come after $e$ in clockwise order, until reaching an edge $e'$ that links $u$ to a labeled vertex $u'$. These deleted edges link $u$ to unlabeled vertices, which are then labeled by $d$. If $v$ is already labeled, we set $u'=v$ and $e'=e$. We then delete edges adjacent to $u'$ that comes after $e'$ in counter-clockwise order, until $e$ becomes a bridge (if impossible, the process fails). In the end, if $e' \neq e$, we also delete $e'$. After the deletions, we recursively explore $v$, starting from the edge adjacent to $v$ that is next to $e$ in clockwise order. After the exploration of $v$ is finished, we look for the next (not-yet-deleted) edge adjacent to $u$ next to $e$ in clockwise order, and perform the same procedure as for $e$. The exploration of $u$ finishes when all possibilities are exhausted. Figure~\ref{fig:edge-delete}(b) shows an example of this process, where the partial exploration tree and the deleted edges are shown. Since the exploration always leaves a visited vertex labeled and a visited edge becoming a bridge, in the end we obtain from $T$ a tree with all nodes labeled, denoted by $\mathrm{P}(T)$.

The exploration process above has the following properties.
\begin{lem} \label{lem:triang-expl}
In the exploration of a planar triangulation $T$, whenever we arrive at some new vertex $u$ before any failure, if we delete all explored edges, we obtain several connected components. Given a component $C$, let $v$ be the vertex of $C$ closest to the apex in the exploration tree, which is on the boundary by planarity. The vertex $v$ is called the \tdef{lead vertex} of $C$. Then $C$ satisfies the following properties:
\begin{enumerate}
\item All labeled vertices in $C$ are on the boundary;
\item The labeled vertices of $C$ form a consecutive simple chain $P$ starting from $v$ with decreasing labels in counter-clockwise order;
\item The label of $v$ is not larger than its distance to the apex;
\item $C$ without labels can be regarded as the \emph{core of a triangulation}, with $v$ the apex and $P$ the left boundary.
\end{enumerate}
\end{lem}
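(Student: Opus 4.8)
The plan is to prove all four properties simultaneously as a single invariant, by induction on the number of exploration steps (equivalently, on the number of explored edges). The base case is the initial configuration: before any edge is explored, deleting the (empty) set of explored edges leaves the whole core $C(T)$ as one component $C$ whose lead vertex is the apex. Here Property~4 holds tautologically ($C = C(T)$, apex as apex, left boundary as $P$); Properties~1 and~2 hold because the only labeled vertices are those of the left boundary, which form the boundary chain and all carry label $0$ (a weakly decreasing sequence); and Property~3 holds since the apex has label $0$ and distance $0$.

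For the inductive step, I would first isolate a structural observation that localizes the analysis: at the moment an edge $e=(u,v)$ is explored, the vertex $u$ is the lead vertex of its current component $C$. Indeed, $u$ was reached by a tree edge that had just been made a bridge, so after deleting explored edges $u$ becomes the vertex of $C$ closest to the apex, and it remains the lead vertex while its own edges are being processed. Hence, by the induction hypothesis applied to $C$, I may assume $C$ is a triangulation core with apex $u$ and left boundary the labeled chain $P$, and that $u$ carries label $d-1$. The exploration of $u$ then mimics a top-level exploration of this smaller core, starting from its right boundary.

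Next I would carry out the case analysis of the deletion rule. In the main case where $v$ is unlabeled, the clockwise fan of neighbors of $u$ from $e$ up to the first labeled neighbor $u'$ consists of currently unlabeled vertices $v=x_0, x_1, \ldots, x_m$ (relabeled $d$); because the faces of the core are triangles, consecutive neighbors around $u$ are adjacent, so $x_0, \ldots, x_m$ form a path. Deleting the fan edges around $u$, deleting $e'$, and deleting edges around $u'$ until $e$ becomes a bridge peels off a region $C_v$ bounded on the left by $x_0, \ldots, x_m, u'$. I would then verify that $C_v$ is again a triangulation core with apex $v$ (Property~4), that its left boundary chain is exactly $x_0, \ldots, x_m, u'$ with labels $d, \ldots, d, \ell(u')$, which is weakly decreasing since $\ell(u') \le d-1 < d$ (Property~2), that its labeled vertices lie on this boundary (Property~1), and that the label of $v$ equals its depth and hence does not exceed its distance to the apex (Property~3); the reduced component still containing $u$ must then be checked to remain a core with the truncated chain. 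The already-labeled case ($u'=v$, $e'=e$) is handled by the same peeling argument, now separating the sub-core lying between the two boundary occurrences of $v$.

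The main obstacle is Property~4 together with the precise boundary claim of Property~2: one must show the peeled-off region is genuinely the core of a triangulation---in particular that its two boundaries are simple and it carries no multiple edges---and that the instruction ``delete until $e$ becomes a bridge'' cuts neither too much nor too little, so that exactly one sub-core is separated and the remainder is again a core. This is where planarity and the triangular face structure do the real work: they guarantee that the clockwise fans translate into boundary paths and that the bridge-forming deletion corresponds to slicing the core along a single simple path, which is exactly what keeps every resulting component a well-formed triangulation core. I expect Properties~1 and~3 to follow as routine bookkeeping once this structural claim is established.
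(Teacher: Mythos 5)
Your overall strategy coincides with the paper's: induction on the number of explored edges, with the initial core as base case, and at each step the component $C$ of the current vertex splits into an upper component $C_1$ (lead vertex $u$) and a lower one $C_2$ (lead vertex $v$), for which the four properties are re-verified using planarity and the triangular faces. However, two of your concrete claims are wrong as stated. First, you assume the lead vertex $u$ of depth $d-1$ ``carries label $d-1$.'' This is false in general: a vertex can receive its label strictly before it is first visited (either through the fan deletion, which labels the unlabeled neighbours $x_1,\ldots,x_m$ of $u$, or because $v$ was already labeled), so when the exploration later reaches such a vertex its label is strictly smaller than its depth --- this is precisely why Property~3 is an inequality rather than an equality. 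The bound you need, $\ell(u') \le d-1$, must instead be extracted from the induction hypothesis as the paper does: by Property~2 the labels along the chain $P$ decrease from the lead vertex, and by Property~3 the lead vertex has label at most $d-1$, so \emph{every} label in $C$ is at most $d-1$. The same correction is needed for Property~3 on $C_2$ in the already-labeled case, where one uses $\ell(v)\le\ell(u)\le d-1 < d$, not equality.

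Second, your identification of the labeled chain of $C_2$ as ``exactly $x_0,\ldots,x_m,u'$'' is incomplete. The edge $e'$ may be a chord from $u$ to a vertex $u'$ lying in the \emph{middle} of the chain $P$; in that case the portion of $P$ below $u'$ also stays connected to $u'$ inside $C_2$ and consists of labeled vertices. The correct statement --- and the one the paper proves --- is that the newly labeled vertices (all labeled $d$) are \emph{prepended} to the surviving portion of the old chain $P$ in $C_2$, the result being decreasing because all old labels are at most $d-1 < d$; symmetrically, $C_1$ retains the segment of $P$ starting at its lead vertex $u$. Without this your Property~2 verification fails exactly in the chord configurations. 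Finally, you correctly identify the structural heart of Property~4 (that the deletion ``until $e$ becomes a bridge'' slices the component along a single simple path, so both pieces remain cores) but you leave it as an expectation rather than an argument; note that the paper discharges it in two parts: inside the lemma it observes that internal faces remain triangles and that the new boundary portions are simple (fan paths, no double edges), while the guarantee that the bridge-forming deletion never fails is proved separately, in Proposition~3.5, using the cut-vertex structure of $v$ when $v$ is already labeled. Your proof would need to fill in both of these pieces to be complete.
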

\begin{proof}  
  At the beginning of the exploration, there is only one component $C(T)$ with its vertices on the left boundary labeled by $0$, which satisfies all the properties. We now prove that these properties are kept after exploring an edge. From the exploration process (see Figure~\ref{fig:edge-delete}(a)), when an edge $\{u, v\}$ is explored, we split a component $C$ into two, the upper one $C_1$ and the lower one $C_2$. We suppose that $C$ satisfies all properties above, and we only need to prove that $C_1$ and $C_2$ also satisfy these properties, with lead vertices $u$ and $v$ respectively. Let $d$ be the distance of $u$ to the apex of $T$ in the exploration tree.

  For Property 1, by construction, all labeled vertices in both $C_1$ and $C_2$ are on their boundaries.
  
  Let $P$ be the chain of labeled vertices in $C$. Property 2 on $C_1$ is clear, since its labeled vertices form a segment of the chain $P$, starting from its lead vertex $u$. For the Property 2 on $C_2$, by Property 2 and 3 applied to $C$, all vertices in $C$, which include those in $C_2$, have labels at most $d$. Only when $v$ is not yet labeled before exploring $\{u, v\}$ do we have newly labeled vertices, which are all labeled by $d+1$. These newly labeled vertices precede already labeled vertices on the portion of $P$ on $C_2$ to form a new path $P'$ with decreasing labels in clockwise order. Therefore, the second property also holds on $C_2$.

  For Property 3, since it is also satisfied by $C$, we have $\ell(u) \leq d$, which means that $C_1$ also satisfies Property 3. For $C_2$, its lead vertex $v$ has distance $d+1$ to the apex. If $v$ has no label before the exploration, it receives a label $\ell(v)=d+1$. Otherwise, combining Properties 2 and 3 on $C$, we have $\ell(v) \leq \ell(u) \leq d$. In both cases, we have $\ell(v) \leq d+1$, which makes $C_2$ satisfy Property 3.

  For Property 4, since it is satisfied by $C$, all internal faces of $C$, thus those of $C_1$ and $C_2$, are triangles. Furthermore, the boundaries of $C_1$ and $C_2$ are simple, because they are either a portion of a boundary of $C$, or created from edge deletion that involves no double edges. Therefore, Property 4 is also satisfied by both $C_1$ and $C_2$.
\end{proof}

Property~4 of Lemma~\ref{lem:triang-expl} can also be seen as a recursive decomposition of cores of triangulations. We also have the following simple observation on the exploration process.

\begin{prop} \label{prop:P-descendant}
  Let $T$ be a planar triangulation, and $u$ its vertex. Suppose that the exploration on $T$ does not fail, meaning that $\mathrm{P}(T)$ is well-defined. Then the descendants of $u$ in $\mathrm{P}(T)$ are exactly those in the component $C_u$ with $u$ as apex on the first visit of $u$.
\end{prop}
\begin{proof}
  It is clear that, at the first visit of $u$, the edge $e$ that leads to this visit was already a bridge, and no other vertex in $C_u$ other than $u$ was visited yet. Since the exploration does not fail, every vertex of $C_u$ is visited during the exploration of $u$, making them descendant of $u$ in $\mathrm{P}(T)$.
\end{proof}

Using these facts, we can prove that the image of $\mathrm{P}$ is the set of sticky trees. 

\begin{prop} \label{prop:t-to-s}
Let $T$ be a planar triangulation with $n$ internal vertices. Then $\mathrm{P}(T)$ is always defined and is a sticky tree with $n$ edges.
\end{prop}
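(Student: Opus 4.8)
The plan is to verify that $\mathrm{P}(T)$ satisfies all three conditions defining a sticky tree, leaning heavily on Lemma~\ref{lem:triang-expl} to control the structure of the components that appear during the exploration. First I would confirm that the process never fails, i.e.\ that the edge $e$ can always be made into a bridge by the prescribed counter-clockwise deletions around $u'$: this should follow from Property~4 applied to the current component $C$, since $C$ without labels is the core of a triangulation and the relevant deletion simply separates off the triangular strip between $e'$ and $e$, which is always possible in a triangulated disk. Counting edges is immediate, since each step leaves exactly one explored edge as a bridge (a tree edge) and the total number of tree edges matches the $3n$ edges of $T$ minus the deleted ones; more cleanly, $C(T)$ has $n+1$ vertices by the remark preceding the statement, so $\mathrm{P}(T)$ has $n+1$ nodes and hence $n$ edges.

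Next I would establish Condition~1. When a node $v$ at depth $d$ receives its label, Property~3 of Lemma~\ref{lem:triang-expl} applied to the component led by $v$ gives $\ell(v) \le d$, and labels are nonnegative by construction (left-boundary vertices get $0$, newly discovered vertices inherit the depth of the exploring vertex plus one). This directly yields $0 \le \ell(u) \le \operatorname{depth}(u)$ for every node. For Condition~2, I would argue that a primary node $u$ of depth $d$ always has, somewhere in its subtree, a vertex whose label is strictly less than $d$: the lead vertex of the component born at the moment $u$ is first visited lies on the boundary and, by Properties~2 and~3, its labeled chain has decreasing labels, so either $u$ itself or a descendant records the label of a boundary vertex closer to the apex, which is $< d$. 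Equivalently, since $C(T)$ is connected, the bridge linking $u$'s subtree to the rest cannot isolate the subtree label-wise, forcing a descendant with a smaller label.

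The main obstacle will be Condition~3, which encodes planarity. Here I would fix a primary node $u$ of depth $d$ and suppose $v_2$ is a descendant with $\ell(v_2)=d$ appearing in the prefix order; by Lemma~\ref{lem:primary-well-founded} the relevant comparison is against such primary $u$. I would analyze the component $C$ whose lead vertex carries label $d$: by Property~2 its labeled vertices form a consecutive simple chain with \emph{decreasing} labels in counter-clockwise order, so the appearance of another label-$d$ vertex $v_2$ in the subtree means it must sit at the very start of its own freshly created chain. The key point is that in the clockwise depth-first exploration, any node $v_1$ preceding $v_2$ in prefix order lies in a region that was split off \emph{before} the chain reaching back down to depth $d$ was created, so $v_1$ belongs to a component whose lead vertex has depth (hence label ceiling) at least $d$; combined with Property~2's monotonicity this forces $\ell(v_1) \ge d$. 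I expect the delicate part to be tracking exactly which component $v_1$ falls into relative to the boundary chain and the deleted-edge strips, and making the ``comes before in prefix order'' hypothesis interact correctly with the counter-clockwise decreasing-label structure; drawing the analogue of Figure~\ref{fig:m-to-s-cond3} for the triangulation side and invoking planarity through Property~4 should close this gap.
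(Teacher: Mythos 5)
Your plan coincides with the paper's: prove non-failure and then verify the three sticky-tree conditions using Lemma~\ref{lem:triang-expl}. The size count and Condition~1 are fine, and your Condition~2 argument is the paper's in looser clothing (the paper's concrete witness is the labeled vertex $u'$ reached during the edge deletion, which ends up in the new component led by the freshly labeled node and has label at most the parent's depth, hence strictly less than the node's depth). But at the two places where planarity actually has to do work, your proposal has genuine gaps. For non-failure, ``the deletion separates off a triangular strip, which is always possible in a triangulated disk'' is not an argument: one needs the case analysis that when $v$ is already labeled it lies on \emph{both} the left and right boundaries of its component (hence is a cut vertex), while when $v$ is unlabeled the vertex $u'$ lies on the left boundary, so the counter-clockwise deletions around $u'$ really do make $e$ a bridge.

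More seriously, your key inference for Condition~3 --- that $v_1$ lies in a component whose lead vertex has depth at least $d$, and that this ``label ceiling'' together with Property~2's monotonicity forces $\ell(v_1)\ge d$ --- is a non sequitur: Properties~2 and~3 give only \emph{upper} bounds (the lead label is at most the depth, and labels decrease along the chain), so a component with a deep lead vertex can perfectly well contain chain vertices labeled far below $d$. What is needed is a lower bound, and the paper obtains it differently: all labels created inside $C_u$ after its birth are at least $d+1$, so the descendant $v$ with $\ell(v)=d$ must already have been on the labeled chain of $C_u$ at its creation, and by the decreasing-chain property every chain vertex with label $<d$ lies strictly \emph{after} $v$; then, at the moment the exploration first reaches $v$, the tree path $P$ from $u$ to $v$ splits $C_u$, Properties~2 and~3 bound all labels on $P$ and on the unexplored side $C'$ from below by $d$, and --- this is the step your sketch never supplies --- the prefix-predecessors of $v$ inside the subtree rooted at $u$ are exactly the nodes of $C'\cup P$, because the exploration turns clockwise around each vertex while the prefix order is the counter-clockwise contour, so prefix order among siblings reverses exploration order. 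Without identifying which side of the split the prefix-predecessors fall on, one cannot conclude that they all have labels $\ge d$; you flag this step yourself as the part you ``expect'' to close, and it is precisely the missing content of the proof.
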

\begin{proof}
  The size parameter is clearly preserved. To prove that $\mathrm{P}(T)$ is always defined, we need to prove that the exploration never fails. We now use some terminologies in Lemma~\ref{lem:triang-expl}. Suppose that the exploration arrives at a vertex $u$ without failure. We consider the component $C$ with $u$ as lead vertex, which is well-defined since there is no failure up to $u$. We can thus apply Lemma~\ref{lem:triang-expl} to $C$. We now consider the exploration of $v$ from $u$ through the edge $e$. The vertex $v$ is either labeled or unlabeled upon visit. If $v$ is labeled, by the exploration process and Properties~2~and~4 of Lemma~\ref{lem:triang-expl}, it must be on both the left and the right boundaries of $C$, thus a cut vertex of $C$, and $e$ can be made a bridge by edge-deletion. If $v$ is unlabeled, then $u'$ must be on the left boundary of $C$ again by Properties~2~and~4, and by planarity, the edge $e'$ cuts $C$ into two parts, meaning that the edge-deletion makes $e$ a bridge. The exploration can thus continue without failure.

  It remains to prove that $\mathrm{P}(T)$ is a sticky tree. Condition~1 comes from Property 3 in Lemma~\ref{lem:triang-expl} applied to $\mathrm{P}(T)$. For Condition~2, given a non-root node $v$ in $\mathrm{P}(T)$, we consider the first moment the exploration visits $v$ in $T$ from another vertex $u$ of distance $d$ to the apex of $T$. If $v$ is already labeled, by Properties 2 and 3 in Lemma~\ref{lem:triang-expl} applied to the component of $u$, the label of $v$ must be at most $d$, which is one less than the distance of $v$ to the apex of $T$. We remark that $v$ corresponds to a derived node in $\mathrm{P}(T)$ in this case. If $v$ is not yet labeled, by the exploration process and Proposition~\ref{prop:P-descendant}, the vertex $u'$ will be visited from a descendant of $v$, and $u'$ has a label at most $d$. Condition~2 is also satisfied in this case.

  For Condition~3, we consider a non-root node $u$ of depth $d$ in $\mathrm{P}(T)$. Let $C_u$ be the component with $u$ the lead vertex upon the first visit of $u$ in the exploration. The descendants of $u$ in $\mathrm{P}(T)$ are exactly vertices in $C_u$, as in Proposition~\ref{prop:P-descendant}. It is clear that all new labels must be at least $d+1$. Therefore, a descendant $v_1$ of $u$ with a label $d$ must have already been labeled in $C_u$. By Property~2 in Lemma~\ref{lem:triang-expl}, for a vertex $v_2$ with a label strictly less than $d$, it can only come after $v_1$ in counter-clockwise order on the left boundary. We thus only need to prove that $v_2$ also comes after $v_1$ in prefix order in $\mathrm{P}(T)$. Let $w$ be the lowest common ancestor of $v_1$ and $v_2$ in $\mathrm{P}(T)$. If $w$ is equal to one of $v_1$ and $v_2$, then by their order on the left boundary of $C_u$, we must have $w=v_1$, and $v_2$ indeed comes after $v_1$ in prefix order. We now suppose that $w$ is different from either $v_1$ or $v_2$. Since $C_w$ contains $v_1$ and $v_2$ by Proposition~\ref{prop:P-descendant}, it must also contains in its left boundary a segment $S$ of the left boundary of $C_u$ from $v_1$ to $v_2$. Let $w_1$ (resp. $w_2$) be the child of $w$ in $\mathrm{P}(T)$ whose exploration leads to $v_1$ (resp. $v_2$). If $w_1$ comes after $w_2$ in the prefix order, then $w_1$ will be visited first in the exploration. As in the proof of Lemma~\ref{lem:triang-expl}, the exploration process will break $S$ into two parts, $S_1$ and $S_2$ in clockwise order, with $C_{w_1}$ containing the latter part $S_2$. However, since $v_1$ precedes $v_2$ on $S$, we know that $v_1 \in S_1$ and $v_2 \in S_2$, which leads to a contradiction to the definition of $w_1$ and $w_2$. Therefore, $w_1$ comes before $w_2$ in the prefix order in $\mathrm{P}(T)$, meaning that $v_1$ precedes $v_2$ also in the prefix order. Therefore, Condition~3 is satisfied.
\end{proof}

We now describe the reverse direction from sticky trees to triangulations. Let $S$ be a sticky tree. We perform the following ``triangulation reconstruction'' as illustrated in Figure~\ref{fig:rejoin-tree}. For a non-root node $v$ of depth $d$ in $S$, let $u$ be its parent, $v'$ its certificate, $P$ the right-most branch of the sub-tree rooted at the child of $u$ that precedes $v$ in the prefix order (just to the left of $v$), and $e$ the edge from $v$ to $u$. When $v$ is primary, we link $u$ to the left of descendants of $v$ with a label $d$ in a non-crossing fashion. We also link $u$ to $v'$ by an edge $e'$, and then $v'$ to the right of nodes on $P$, if $P$ exists, with edges in counter-clockwise order around $v'$ after $e'$. When $v$ is derived, we link $v$ itself to the right of nodes on $P$ in the same way as the previous case when $P$ exists. We say that the new edges adjacent to $u$ in the case where $v$ is primary are of \tdef{type A}, and all other new edges are of \tdef{type B}. We observe that type A edges link a node to one of its descendants, while type B edges do not. This procedure is performed from the last node in the prefix order up to the first one. We denote by $\mathrm{Q}'(T)$ the map we get. We have the following proposition.

\begin{figure}
  \begin{center}
    \includegraphics[page=6,scale=0.92]{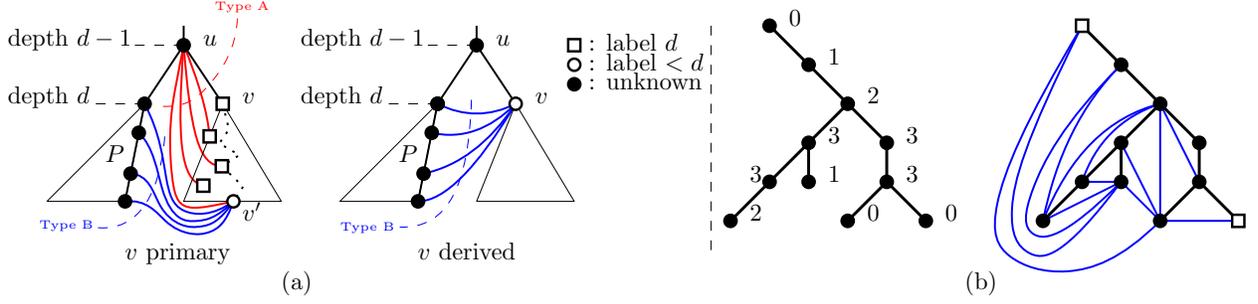}
  \end{center}
  \caption{(a) Two cases of adding edges in $\mathrm{Q}'(S)$ (b) Example of reconstruction}
  \label{fig:rejoin-tree}
\end{figure}

\begin{prop} \label{prop:s-to-t}
Let $S \in \mathcal{S}_n$ be a sticky tree with $n$ edges. The map $\mathrm{Q}'(S)$ obtained from $S$ is the core of a planar triangulation (denoted by $\mathrm{Q}(S)$), with is apex the root of $S$ and its base the last node in $S$ in the prefix order.
\end{prop}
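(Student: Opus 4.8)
The plan is to run the reconstruction $\mathrm{Q}'$ in its defining order --- from the last node in the prefix order back to the first --- and to prove, by induction on the reconstruction steps performed, an invariant that mirrors Lemma~\ref{lem:triang-expl} read backwards. In the forward direction $\mathrm{P}$ repeatedly splits a core-shaped component $C$ into an upper piece $C_1$ (lead vertex $u$) and a lower piece $C_2$ (lead vertex $v$) when an edge $\{u,v\}$ is explored; the reconstruction step for a node $v$ should do exactly the reverse, fusing the pieces on the two sides of $v$ back into a single piece that is again the (unlabelled) core of a triangulation, with simple left and right boundaries and with its labelled nodes forming a descending chain down the left boundary from the lead vertex, as in Properties~1--4 of Lemma~\ref{lem:triang-expl}. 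Initially no region is closed off and the invariant holds vacuously; once it is shown to survive every step, the final state is a single core whose lead vertex is the root of $S$.

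For the inductive step I would fix a node $v$ of depth $d$ with parent $u$, certificate $v'$, and left-neighbouring branch $P$, and treat the two cases of the definition. When $v$ is primary, the type~A edges join $u$ to the descendants of $v$ labelled $d$ and the type~B edges join $v'$ to the right side of $P$; when $v$ is derived, only the type~B edges from $v=v'$ are added. In each case I must verify that (i) the new edges can be drawn without crossings in the current map, (ii) every bounded region they create is a triangle, and (iii) the merged piece again has simple boundaries and the correct descending label chain. Here the sticky-tree axioms enter: Condition~1 keeps every label between $0$ and its depth, so the attachment depths are legal; Condition~2 guarantees that the certificate $v'$ --- a descendant with label below $d$ --- exists, which is what lets the right-hand region be closed off; and Condition~3 forces the descendants of $v$ labelled $d$ together with $v'$ to appear in the prefix order in a pattern that makes the type~A and type~B edges nest without crossing and without producing a double edge.

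The main obstacle I anticipate is precisely this planarity-and-triangulation verification of the inductive step, namely (i) and (ii). One must argue that the descendants of $v$ labelled $d$ occupy consecutive positions along the boundary in the cyclic order around $u$ dictated by the prefix order, that the nodes of $P$ are presented to $v'$ in the prescribed counter-clockwise order, and that each gap between consecutive attachment points is filled by exactly one triangular face rather than a larger polygon. This is the mirror image of the planarity argument in Proposition~\ref{prop:b-to-s}, where Condition~3 was likewise the load-bearing hypothesis; I expect the same condition, read against the geometry of Figure~\ref{fig:rejoin-tree}, to carry the verification, though tracking the relative positions of the many attachment points is delicate.

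Finally, once the invariant survives all the reconstruction steps, the whole of $\mathrm{Q}'(S)$ is a single core of a triangulation whose lead vertex is the root of $S$. I would then read off that its apex is the root of $S$ and its base is the last node in the prefix order (the terminal vertex of the left boundary), confirm that both boundaries are simple because $S$ has no repeated adjacency and the insertions introduce none, and check that re-attaching the two deleted vertices by the recovery rule of Section~\ref{sec:bij-tri} yields a genuine loopless, simple $3$-connected triangulation, so that $\mathrm{Q}(S)\in\mathcal{T}_n$ is well defined and the proposition follows.
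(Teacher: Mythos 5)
Your plan correctly identifies where the difficulty lies, but it does not resolve it, and the unresolved part is the entire mathematical content of the proposition. Steps (i) and (ii) of your inductive step---that the type~A and type~B edges added for a node $v$ can be drawn without crossings, and that every bounded region they create is a triangle---are exactly what must be proven, and you offer only the expectation that Condition~3 ``read against the geometry of Figure~\ref{fig:rejoin-tree}'' will carry the verification. There is also a circularity lurking in your framing: the invariant you propose to maintain ``mirrors Lemma~\ref{lem:triang-expl} read backwards,'' but that lemma is proven for the exploration $\mathrm{P}$ of a map already known to be a planar triangulation, which is precisely what you are trying to construct. To run your induction you would have to re-establish the full boundary structure of each reconstructed piece (which vertices lie on its boundary, in what cyclic order, with what labels) from the sticky-tree axioms alone, and then check triangularity of each new face against that structure---none of which is sketched.

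The paper's actual proof takes a different, global route that sidesteps the face-by-face check entirely. It proves three things about the finished map $\mathrm{Q}'(S)$: no loops or multiple edges (a short case analysis on types A and B); planarity, via an explicit refutation of each possible crossing configuration---two type~A edges, or a type~A edge against a type~B edge, with two sub-cases each---where Condition~3 and the minimality of certificates yield contradictions on the labels $\ell(v_1)\leq d_1+1 < d_2+1$ and the prefix-order positions of the endpoints; and finally an edge count: exactly $n-k_r$ type~B edges (one per node off the right-most branch) and $n-k_\ell$ type~A edges (one per node with non-zero label), which together with Euler's relation forces every internal face of a simple planar map with $n+1$ vertices to be a triangle. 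If you want to salvage your backward-induction approach, the crossing case analysis you deferred is unavoidable in some form, and you should expect it to look much like the paper's; the Euler-count trick is worth adopting in any case, since proving directly that each gap between consecutive attachment points around $u$ and $v'$ closes into a single triangle is substantially harder than counting.
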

\begin{proof}
  We first prove that $\mathrm{Q}'(S)$ has no multiple edge nor loop. By definition, it is clear that $\mathrm{Q}'(S)$ has no loop, and new edges do not double with edges in $S$. We now discuss new edges by their types. Recall that there are two types of new edges: type A that link a node to one of its descendants, and type B that do not. If there are multiple edges, they must be of the same type. Type A edges come from linking the parent $u$ of a primary node $v$ to descendants of $v$, which cannot be double because each descendant of $v$ links to $u$ only once. Type B edges come from linking the certificate of a node $v$ to the right-most path $P$ that comes just before $v$. Similarly, they cannot be double because $P$ is always simple.

  Secondly, we prove that $\mathrm{Q}'(S)$ is planar. It is clear that new edges never cross edges already in $S$, and type B edges do not cross each other by construction. Furthermore, by the order we add edges, two edges sharing a vertex do not cross. Now we will study other possibilities case by case.

  We start by proving that there is no crossing between type A edges. Suppose that two type A edges $e_1 = \{u_1, v_1\}$ and $e_2 = \{u_2, v_2\}$ cross each other, with $u_i$ an ancestor of $v_i$ of depth $d_i$ for $i=1,2$. The crossing implies that, without loss of generality, $u_1$ is an ancestor of $u_2$, which means $d_1 < d_2$. Furthermore, $v_1$ and $v_2$ have a common ancestor $u'$ of depth $d_2 + 1$, and $v_1$ must precede $v_2$ in the prefix order, or else $e_1$ and $e_2$ will not cross each other. Figure~\ref{fig:link-case}(a) illustrates this case. Let $u''$ be the ancestor of $v_1$ of depth $d_1+1$. By the construction of type A edges, either $v_1$ is the certificate of $u''$, or we have $\ell(v_1) = d_1+1$. In both cases, we have $\ell(v_1) \leq d_1+1$. Since $e_2$ is a type A edge, either $\ell(v_2)=d_2+1$, or $v_2$ is the certificate of $u'$. It is not possible that $\ell(v_2)=d_2+1$, because it will violate Condition~3 of sticky trees on $u'$, whose descendant $v_1$ precedes $v_2$ with $\ell(v_1) \leq d_1+1 < \ell(v_2)=d_2+1$, which is the depth of $u'$. It is also impossible that $v_2$ is the certificate of $u'$, since $u'$ is of depth $d_2+1$, larger than the label of $v_1$, and $v_1$ precedes $v_2$. Therefore, we reach a contradiction, which means edges of type A never cross each other.

  We now prove that there is no crossing between type A and B edges. Suppose that a type A edge $e_1$ from $u_1$ to $v_1$ crosses a type B edge $e_2$ from $u_2$ to $v_2$ with disjoint vertices. By the construction of type B edges, $v_2$ is the certificate of some vertex $v_2'$. Let $d_1$ be the depth of $u_1$, $d_2$ the depth of $v_2'$, and $w$ be the parent of $v_2'$. We know that $\ell(v_1) \leq d_1+1$ and $\ell(v_2) < d_2$. According to either $u_1$ precedes both $u_2$ and $v_2$ in the prefix order or lies in between, there are two possibilities for $e_1$ to cross $e_2$: either $v_1$ is in the sub-tree rooted at $v_2'$ and precedes $v_2$ in the prefix order, and $u_1$ is an ancestor of $w$ (see Figure~\ref{fig:link-case}(b)); or $u_1$ is in the sub-tree rooted at $v_2'$, and one of its child $u_1'$ is a common ancestor of $v_1$ and $v_2$, while $v_1$ follows $v_2$ in the prefix order (see Figure~\ref{fig:link-case}(c)). In the first case, since $v_1$ is a descendant of $v_2'$ that precedes $v_2$, which is the certificate of $v_2'$, we must have $\ell(v_1) \geq d_2$. Therefore, $d_1+1 \geq d_2$, which is impossible, since $u_1$ should be an ancestor of $w$, thus also of $v_2'$. In the second case, we have $d_1 \geq d_2$, and according to the construction of type A edges, either $\ell(v_1)=d_1+1$ or $v_1$ is the certificate of $u_1'$. It is not possible that $\ell(v_1)=d_1+1$, or else Condition~3 of sticky trees will be violated on $u_1'$ and its two descendants $v_1$ and $v_2$. However, $v_1$ cannot be the certificate of $u_1'$, since $v_2$ precedes $v_1$ in the prefix order, and we already have $\ell(v_2) < d_2 < d_1 + 1$, with $d_1+1$ the depth of $u_1'$. We also reach a contradiction for the second case, concluding that no type A edge crosses a type B edge. We thus establish that $\mathrm{Q}'(S)$ is planar.

  We finally prove that $\mathrm{Q}'(S)$ has the good number of edges to be the core of a planar triangulation. We know that $\mathrm{Q}'(S)$ has $n+1$ vertices, and let $k_\ell$ (resp. $k_r$) be the number of edges on their left (resp. right) boundary. By Euler's relation, we need to prove that there are $2n-k_\ell-k_r$ new edges. We first observe that the right boundary of $\mathrm{Q}'(S)$ is the right-most branch $P_r$ of $S$, and every node except those on $P_r$ contribute exactly one type B new edge, which makes $n-k_r$ new edges of type B. We now prove that all nodes with a non-zero label contribute each exactly one type A new edge. Let $u$ be a non-root node. If $u$ is primary, its label is not $0$, and it contributes the edge that links its parent to its certificate; if $u$ is derived with a non-zero label, it contributes the edge that links $u$ to its ancestor of depth $d-1$. Since the nodes with the label $0$ are exactly those on the left boundary, we have $n-k_\ell$ edges of type A, which makes up the good total number. We conclude that $\mathrm{Q}'(S)$ is the core of a triangulation.
\end{proof}

\begin{figure}
  \begin{center}
    \includegraphics[scale=0.85, page=10]{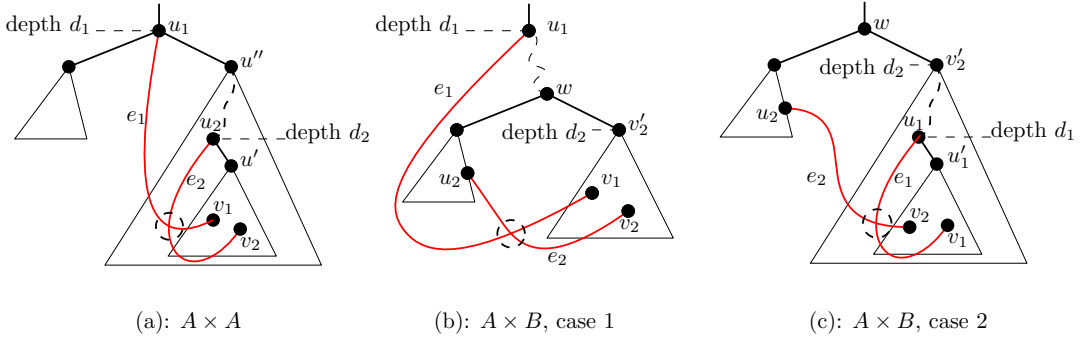}
  \end{center}
  \caption{Cases of potential crossings in triangulation reconstruction}
  \label{fig:link-case}
\end{figure}

Readers may remark that new edges added to $S$ in the core of $\mathrm{Q}(S)$ for a vertex $u$ are exactly those removed when visiting $u$ in the construction of $\mathrm{P}(\mathrm{Q}(S))$. The following theorem is thus not a surprise.

\begin{thm} \label{thm:s-t-bij}
For all $n \geq 1$, the transformation $\mathrm{P}$ is a bijection from $\mathcal{T}_n$ to $\mathcal{S}_n$, and $\mathrm{Q}$ is its inverse.
\end{thm}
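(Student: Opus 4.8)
The plan is to follow exactly the template of the proof of Theorem~\ref{thm:b-s-bij}. By Propositions~\ref{prop:t-to-s} and~\ref{prop:s-to-t} we already know that $\mathrm{P}$ maps $\mathcal{T}_n$ into $\mathcal{S}_n$ and that $\mathrm{Q}$ maps $\mathcal{S}_n$ into $\mathcal{T}_n$, and both transformations manifestly preserve the size parameter (a triangulation with $n$ internal vertices has a core with $n+1$ vertices, hence an exploration tree with $n$ edges, and conversely a sticky tree with $n$ edges yields a core with $n+1$ vertices). Hence it suffices to establish the two composition identities $\mathrm{P}(\mathrm{Q}(S))=S$ for every sticky tree $S\in\mathcal{S}_n$ and $\mathrm{Q}(\mathrm{P}(T))=T$ for every triangulation $T\in\mathcal{T}_n$. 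The single idea driving both is the edge correspondence already flagged before the statement: when $\mathrm{Q}'$ processes a node $u$ it adds precisely the type A and type B edges, and these are exactly the edges that the deletion step of $\mathrm{P}$ removes the first time the exploration reaches $u$. Making this correspondence precise, together with a check that the clockwise cyclic order around every vertex agrees, will give the theorem.

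For $\mathrm{P}(\mathrm{Q}(S))=S$, I would run the clockwise depth-first exploration with edge deletion on the core $\mathrm{Q}'(S)$ and argue by induction along the exploration, using Lemma~\ref{lem:triang-expl} to control the current component. The invariant to maintain is that, at the moment $\mathrm{P}$ first visits a node $u$, the sub-core carried by the component led by $u$ is exactly the image under $\mathrm{Q}'$ of the sub-tree $S_u$ of $S$, with its left boundary carrying the labels dictated by $S$. Under this invariant I would verify three things step by step: (i) the edges explored and kept as tree-edges are exactly the edges of $S$, encountered in the prefix order of $S$; (ii) the edges deleted when $u$ is reached---the consecutive edges around the parent up to a labeled vertex, together with the edges making the current edge a bridge---are precisely the type A edges (parent of a primary node to its descendants labeled $d$) and the type B edges (certificate to the right-most branch $P$) that $\mathrm{Q}'$ inserted for $u$; and (iii) the labels assigned by $\mathrm{P}$ agree with those of $S$, which reduces to checking that primary nodes receive their depth and derived nodes inherit the label of the appropriate ancestor, exactly as in the construction of $\mathrm{Q}'$. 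Because $\mathrm{Q}'$ places the descending edges in the non-crossing clockwise order forced in the proof of Proposition~\ref{prop:s-to-t}, the order in which $\mathrm{P}$ encounters and deletes these edges matches, closing the induction.

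For $\mathrm{Q}(\mathrm{P}(T))=T$, I would argue by uniqueness of the reconstruction, mirroring the $\mathrm{R}(\mathrm{S}(B))=B$ part of Theorem~\ref{thm:b-s-bij}. The exploration $\mathrm{P}$ records $\mathrm{P}(T)$ as a spanning tree of the core $C(T)$ together with a determined set of deleted edges, each of which is either an edge joining a vertex to a descendant (a type A candidate) or an edge forced by the bridge-creating deletion (a type B candidate). I would show that, given the labels and the certificate structure of $\mathrm{P}(T)$, the edges that $\mathrm{Q}'$ is obliged to add for each node coincide with these deleted edges, and that the non-crossing constraint fixes their cyclic positions uniquely. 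Consequently the cyclic order around every vertex of $\mathrm{Q}'(\mathrm{P}(T))$ equals that of $C(T)$; recovering $C(T)$ as a rooted plane map and re-attaching the two root vertices along the left and right boundaries then returns $T$ exactly.

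The main obstacle, I expect, is step (ii): rigorously matching the edges deleted by $\mathrm{P}$ with the edges added by $\mathrm{Q}'$, position by position in the embedding. One must align the ``delete consecutive edges after $e$ in clockwise order until a labeled vertex $u'$'' rule against the type A edges joining the parent to the descendants labeled $d$, and align the ``delete edges around $u'$ until $e$ becomes a bridge'' rule against the type B edges joining the certificate to the right-most branch $P$---all while tracking the labels and the clockwise cyclic order so that planarity and the prefix order of the exploration are respected. Once this local dictionary is set up cleanly, both composition identities follow by the same induction, and the preservation of the size parameter upgrades $\mathrm{P}$ and $\mathrm{Q}$ to mutually inverse bijections.
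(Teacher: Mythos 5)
Your proposal takes essentially the same route as the paper: the published proof likewise reduces the theorem to the two composition identities via Propositions~\ref{prop:t-to-s} and~\ref{prop:s-to-t}, dispatches $\mathrm{Q}(\mathrm{P}(T))=T$ by the observation that the edges added by $\mathrm{Q}'$ are exactly those deleted during the construction of $\mathrm{P}(T)$, and proves $\mathrm{P}(\mathrm{Q}(S))=S$ by noting that this same edge correspondence preserves the tree structure and then verifying the labels (primary nodes first, then derived nodes via Condition~3 and certificates, with the remaining label-$0$ nodes forming the left boundary). Your inductive invariant along the exploration and the uniqueness-of-reconstruction argument for the cyclic orders merely flesh out, in the style of the proof of Theorem~\ref{thm:b-s-bij}, details that the paper states tersely, so the proposal is correct and in the same spirit.
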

\begin{proof}
  The size parameter is clearly preserved by the bijections. By Proposition~\ref{prop:t-to-s} and Proposition~\ref{prop:s-to-t}, we only need to prove that $\mathrm{Q}(\mathrm{P}(T))=T$ for any planar triangulation $T$, and $\mathrm{P}(\mathrm{Q}(S))=S$ for any sticky tree $S$. It is easy to see $\mathrm{Q}(\mathrm{P}(T))=T$, since the edges added to $\mathrm{P}(T)$ in the construction of $\mathrm{Q}(\mathrm{P}(T))$ are exactly those removed in the construction of $\mathrm{P}(T)$.

  To prove that $\mathrm{P}(\mathrm{Q}(S))=S$, we observe that the exploration tree of $\mathrm{Q}(S)$ has the same tree structure as $S$, since all edges added into $\mathrm{Q}(S)$ for a node $v$ in $S$ are removed in the construction of $\mathrm{P}(\mathrm{Q}(S))$. We then only need to prove that $\mathrm{P}(\mathrm{Q}(S))$ has the same label as $S$ on each vertex. Since $S$ has the same tree structure as the exploration tree of $\mathrm{Q}(S)$, primary vertices are correctly labeled in $\mathrm{P}(\mathrm{Q}(S))$. Let $v$ be a non-root primary node in $S$ of depth $d$, and $v'$ its certificate. If there is a descendant $w$ of $v$ with a label $d$ in $S$, by Condition~3 of sticky trees, $w$ must come before $v$ in the prefix order. Therefore, by construction, $w$ is correctly relabeled by $d$ in $\mathrm{P}(\mathrm{Q}(S))$. As a consequence, all derived nodes of $S$ with a label at least $1$ are correctly relabeled in $\mathrm{P}(\mathrm{Q}(S))$. The remaining nodes are obliged to be labeled $0$, which are exactly those on the left boundary of the core of $\mathrm{Q}(S)$. Therefore, we have $\mathrm{P}(\mathrm{Q}(S))=S$.
\end{proof}

From a more recursive point of view, Property~4 of Lemma~\ref{lem:triang-expl} can be seen as a recursive decomposition of cores of planar triangulations.

\section{Bijections to other combinatorial structures} \label{sec:bij-oth}

Other than planar bridgeless map and planar triangulations, the counting formula \eqref{eq:count} also counts other combinatorial objects, including intervals in the Tamari lattice of order $n$ \cite{ch06,BB2009intervals} and closed flows on forests with $n$ nodes \cite{chapoton-chatel-pons}. For Tamari intervals, its counting formula was first proved in \cite{ch06} using the symbolic method, then a bijection from Tamari intervals to planar triangulations was given in \cite{BB2009intervals}. For closed flows on forests, it was proved in \cite{chapoton-chatel-pons} using a bijection to interval posets, which are related to Tamari intervals. In the following, we will briefly describe direct bijections from sticky trees to these objects.

We first introduce a function on nodes of a sticky tree, which is useful in both bijections that we describe in the following. Let $S$ be a sticky tree, its \tdef{certificate-counting} function $c$ maps nodes of $S$ to values in $\mathbb{N}_+$, and for a node $u$ in $S$, its value $c(u)$ is the number of nodes whose certificate is $u$.

A \tdef{Dyck path} is a path $D$ on $\mathbb{N}^2$ composed by up steps $u=(1,1)$ and down steps $d=(-1,1)$ that starts and finishes on the $x$-axis while crossing to the negative side. It is clear that Dyck paths have even length. A Dyck path can also be seen as a word in the alphabet $\{u,d\}$. We say that the $i^{\rm th}$ up step $u_i$ is \tdef{matched} with a down step $d_j$ in $D$ if the factor $D_i$ of $D$ between $u_i$ and $d_j$ (excluding $u_i$ and $d_j$) is also a Dyck path. We denote by $\ell_D(i)$ the length of $D_i$. The \tdef{Tamari lattice} of order $n$ is a partial order $\preceq_T$ defined on the set of Dyck paths of length $2n$, where we have $D \preceq_T E$ for two Dyck paths $D$ and $E$ if and only if $\ell_D(i) \leq \ell_E(i)$ for all $1 \leq i \leq n$, and in this case, the pair $[D,E]$ is called an \tdef{interval} $[D,E]$ of the Tamari lattice of order $n$. We say that the size of $[D,E]$ is $n$ in this case. Geometrically, it means that, if for both $D$ and $E$ we draw a horizontal ray from the middle of their $i^{th}$ up step until it touches the same Dyck path again, then the ray can extend longer on $E$ than on $D$. Our definition of the Tamari lattice here is not the standard one, but is an equivalent according to Proposition~5 in \cite{bousquet-fusy-preville}. We also refer readers to \cite{bousquet-fusy-preville} for a more standard and geometric definition illustrated therein.

We now introduce the related notion of \emph{synchronized intervals} here. We first define the \tdef{type} $Type(D)$ of a Dyck path $D$ of length $2n$ as follows: it is a word $w$ of length $n-1$ in two letter $N$ and $E$ such that $w_i=N$ if the $i^{\rm th}$ up step of $D$ is followed by a down step, and $w_i=E$ otherwise. A Tamari interval $I = [D_1,D_2]$ is \tdef{synchronized} if $Type(D_1)=Type(D_2)$. Geometrically, it is equivalent to say that down steps of $D_1$ and $D_2$ occupy the same set of lines. See Figure~\ref{fig:decorated-sticky} for an example of a synchronized interval. More details of synchronized intervals can be found in \cite{PRV2014extension, nonsep}.

To obtain a Tamari interval $\mathrm{I}(S)=[\mathrm{D}(S),\mathrm{E}(S)]$ from a sticky tree $S$, we first do a counter-clockwise contour traversal of $S$, whose variation in depth gives the upper path $\mathrm{E}(S)$. Let $v_1, v_2, \ldots, v_k$ the \emph{non-root} nodes of $S$ in the prefix order, and the lower path $\mathrm{D}(S)$ is given by $\mathrm{D}(S)=ud^{c(v_1)}ud^{c(v_2)}\cdots ud^{c(v_k)}$, where $c$ is the certificate-counting function. Figure~\ref{fig:other-bij} shows an example of this bijection. We can see that $\mathrm{D}(S)$ is always a Dyck path, since the certificate of a node is always itself or its descendant, therefore, in the traversal, we always encounter first a node, then its certificate, giving more $u$ than $d$ in any prefix.

\begin{figure}
  \begin{center}
    \includegraphics[page=7,scale=0.95]{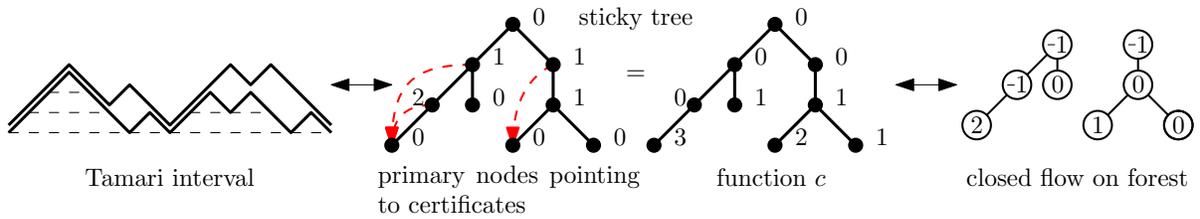}
  \end{center}
  \caption{Bijections from sticky trees to Tamari intervals and to closed flows on forests}
  \label{fig:other-bij}
\end{figure}

\begin{prop} \label{prop:s-to-i}
  For a sticky tree $S$ with $n+1$ nodes, $\mathrm{I}(S)$ is a Tamari interval of length $2n$.
\end{prop}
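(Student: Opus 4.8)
The plan is to verify the two defining properties of a Tamari interval in turn: first that $\mathrm{D}(S)$ and $\mathrm{E}(S)$ are both Dyck paths of length $2n$, and then that $\mathrm{D}(S) \preceq_T \mathrm{E}(S)$, i.e. that $\ell_{\mathrm{D}(S)}(i) \le \ell_{\mathrm{E}(S)}(i)$ for every $1 \le i \le n$. Throughout I would write $v_1, \ldots, v_n$ for the non-root nodes of $S$ in prefix order, and for each $v_i$ let $v_{t_i}$ denote its certificate, so that $t_i \ge i$, with $t_i = i$ exactly when $v_i$ is derived (a derived node being its own certificate by Condition~2).

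First I would check the length and validity of both paths. The path $\mathrm{E}(S)$ is the usual contour encoding of a plane tree with $n$ edges, hence a Dyck path of length $2n$. For $\mathrm{D}(S) = u d^{c(v_1)} \cdots u d^{c(v_n)}$ there are exactly $n$ up steps, one per factor. Since every non-root node has a certificate lying in its own (non-root) subtree, the assignment of each node to its certificate is a map from the $n$ non-root nodes into the non-root nodes, whence $\sum_{i=1}^n c(v_i) = n$; so $\mathrm{D}(S)$ has $n$ down steps and returns to the $x$-axis. For non-negativity I would use that a node never comes after its certificate in prefix order: the set of nodes whose certificate lies in $\{v_1, \ldots, v_j\}$ is therefore contained in $\{v_1, \ldots, v_j\}$, giving $\sum_{s \le j} c(v_s) \le j$ for all $j$, which is precisely the condition that the partial sums of $\mathrm{D}(S)$ stay non-negative. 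Thus $\mathrm{D}(S)$ is a Dyck path of length $2n$.

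Next I would compute both arch-lengths explicitly. In the contour $\mathrm{E}(S)$ the $i$-th up step is the first descent onto $v_i$, and its matching down step is the ascent completing the traversal of the subtree rooted at $v_i$; the arch thus encloses the contour of that subtree, so $\ell_{\mathrm{E}(S)}(i) = 2 m_i$, where $m_i$ is the number of proper descendants of $v_i$. For the lower path the claim I would establish is $\ell_{\mathrm{D}(S)}(i) = 2(t_i - i)$. Granting this, the certificate $v_{t_i}$ lies in the subtree of $v_i$, so in prefix order $v_{i+1}, \ldots, v_{t_i}$ are all proper descendants of $v_i$; hence $t_i - i \le m_i$, and $\ell_{\mathrm{D}(S)}(i) = 2(t_i - i) \le 2 m_i = \ell_{\mathrm{E}(S)}(i)$, which is exactly the required inequality.

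The main obstacle is thus establishing $\ell_{\mathrm{D}(S)}(i) = 2(t_i - i)$, i.e. that the last-in-first-out matching of $\mathrm{D}(S)$ is governed by certificates, so that the arch opened by the $i$-th up step closes inside the factor $u d^{c(v_{t_i})}$. I would prove this through the following laminarity statement about the intervals $[i, t_i]$: if $v_a$ and $v_b$ are non-root nodes with $a < b \le t_a$, then $t_b \le t_a$, so $[b, t_b] \subseteq [a, t_a]$ and the intervals are nested or disjoint. Here $a < b \le t_a$ forces $v_b$ to be a proper descendant of $v_a$ (hence $\mathrm{depth}(v_b) > \mathrm{depth}(v_a)$), and by Lemma~\ref{lem:primary-well-founded} the node $v_a$ is primary, so its certificate $v_{t_a}$ has label strictly below $\mathrm{depth}(v_a) < \mathrm{depth}(v_b)$. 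The argument splits on whether $v_{t_a}$ lies in the subtree of $v_b$: if it does, then $v_{t_a}$ is a descendant of $v_b$ with label small enough that the certificate of $v_b$ occurs no later, giving $t_b \le t_a$; if it does not, then the whole subtree of $v_b$, and in particular its certificate, precedes $v_{t_a}$, again giving $t_b \le t_a$. Laminarity makes the open intervals at any time form a chain ordered exactly as the LIFO stack, so the down steps of each factor close precisely the arches of the nodes whose certificate is the corresponding node, in reverse prefix order; this yields $\ell_{\mathrm{D}(S)}(i) = 2(t_i - i)$ and completes the proof. I expect this laminarity argument, which is where Condition~3 of sticky trees and Lemma~\ref{lem:primary-well-founded} enter, to be the only delicate point, the remaining verifications being routine.
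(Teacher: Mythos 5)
Your proposal is correct, but it takes a genuinely finer route than the paper. The paper never determines the matching structure of $\mathrm{D}(S)$ exactly: fixing $i$ and letting $v_i,\ldots,v_{i+k}$ be the subtree of $v_i$, it observes that the certificate of each of these nodes stays inside the subtree, so the factor $ud^{c(v_i)}\cdots ud^{c(v_{i+k})}$ contains at least $k$ down steps against the $k$ up steps following $u_i$, forcing the match of $u_i$ to close within that factor and giving $\ell_{\mathrm{D}(S)}(i)\le 2k=\ell_{\mathrm{E}(S)}(i)$ in one stroke. You instead prove the exact formula $\ell_{\mathrm{D}(S)}(i)=2(t_i-i)$ via a laminarity lemma for the certificate intervals $[i,t_i]$. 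This is stronger: it exhibits the lower path as an encoding of the certificate map itself (foreshadowing the recovery of the certificate-counting function $c$ from the interval, used later in Section~4), and you also explicitly verify that $\mathrm{D}(S)$ is a Dyck path, a point the paper leaves implicit. The price is a more delicate final step, which you assert rather than prove --- that laminarity makes the down steps of block $j$ pop exactly the open arches with $t=j$; it does go through by a short induction: the unmatched indices $i_1<\cdots<i_m$ on the stack before block $j$ all satisfy $t_{i_r}\ge j\ge i_s$, so laminarity applies pairwise and the $t$-values weakly decrease from bottom to top, placing the entries with $t=j$ on top, and they number exactly $c(v_j)$ since a node never follows its certificate and none was popped earlier. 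Three small inaccuracies, none fatal: the bound $\ell(v_{t_a})<\mathrm{depth}(v_a)$ holds by the very definition of the certificate (it is the first witness of Condition~2), so you do not need $v_a$ to be primary at all; if you do want primality, Lemma~\ref{lem:primary-well-founded} is not the right citation --- the correct reason is that a derived node is its own certificate, so $a<t_a$ forces $v_a$ primary; and, contrary to your closing remark, Condition~3 is in fact nowhere used in your laminarity argument --- like the paper's proof, yours relies only on certificates being descendants and on the prefix-contiguity of subtrees.
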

\begin{proof}
  The fundamental reason is that the certificate of a node $u$ is always a descendant of $u$, therefore always comes after $u$ in the prefix order. We now give a detailed proof. Let $v_1, \ldots, v_n$ be the list of \emph{non-root} nodes in $S$ in prefix order. We now fix $i$ between $1$ and $n$. Suppose that $v_i, v_{i+1}, \ldots, v_{i+k}$ are the nodes in the sub-tree of $S$ rooted at $v_i$. From the construction of $\mathrm{E}(S)$, we know that $v_j$ is first visited after the $j^{\rm th}$ up step $u_j$ in $\mathrm{E}(S)$, and we thus have $\ell_{\mathrm{E}(S)}(i) = 2k$. We now consider the portion $D_{[i,i+k]} = ud^{c(v_i)}\cdots ud^{c(v_{i+k})}$ of $\mathrm{D}(S)$ given by $v_i, \ldots, v_{i+k}$. Since the certificate of a node $w$ must be a descendant of $w$, we have $\sum_{j=i}^{i+k} c(v_j) \geq k$. Therefore, the matching step of the $i^{\rm th}$ up step $u_i$ in $\mathrm{D}(S)$ must be in $D_{[i,i+k]}$. However, there are exactly $k$ up steps in $D_{[i,i+k]}$ other than $u_i$, and we have a Dyck path $D'$ between $u_i$ and its matching step, which then contains at most $k$ step. We thus conclude that $\ell_{\mathrm{D}(S)}(i)$, which is the length of $D'$, is at most $2k$. We thus have $\ell_{\mathrm{D}(S)}(i) \leq \ell_{\mathrm{E}(S)}(i)$ for all $1 \leq i \leq n$, which means that $\mathrm{I}(S)$ is a Tamari interval.
\end{proof}

We now turn to closed flow on forests. A \tdef{forest} is an ordered list $F=(A_1, \ldots, A_k)$ of plane trees. A \tdef{flow} on a forest $F$ is a function $f$ defined on nodes of $F$, also called \tdef{inputs} on nodes, such that $f(v) \geq -1$ for every node $v$, and the \emph{outgoing rate} of each node is non-negative. The \tdef{outgoing rate} of a node $v$ is the sum of all the inputs of nodes in the sub-tree of $F$ rooted at $v$ (including $v$). We say that a flow $f$ on $F$ is \tdef{closed} if the outgoing rates of the roots of all $A_i$'s are all $0$. Given a sticky tree $S$, let $F$ be the forest obtained by deleting the root of $S$, then the function $f(v)=c(v)-1$ on nodes of $S$ based on the counting function $c(v)$ gives a closed flow on $F$. See Figure~\ref{fig:other-bij} for an example.

For a proof of the reversed direction of these two bijections from sticky trees to Tamari intervals and closed forest flows, we first observe that both bijections rely on the certificate-counting function $c$ in a simple way. Thus, we only need to prove that we can recover labels on a sticky tree from its certificate-counting function $c$. A direct proof can be given, but we prefer to see this fact as a special case of a similar result in \nonsep{} for decorated trees, as illustrated in Figure~\ref{fig:decorated-sticky}.

We first recall the notion of decorated trees from \nonsep{}. A \tdef{decorated tree} is a rooted plane tree $R$ with a label function $\ell$ that takes values \emph{only} on leaves, satisfying the following three conditions:
\begin{enumerate}
\item[1'.] For a leaf $f$ attached to a node of depth $d$, we have $-1 \leq \ell(f) \leq d-1$.
\item[2'.] For each internal node $u$ of depth $d>0$, there is at least one descendant leaf $f$ such that $\ell(f) < d-1$.
\item[3'.] For $t$ a node of depth $d$, $u$ a child of $t$ and $R_u$ a sub-tree rooted at $u$, if there is a leaf $f$ in $R_u$ with $\ell(f) = d$ (which is the depth of $t$), then all leaves in $R_u$ that come before $f$ in the prefix order have a label at least $d$.
\end{enumerate}
For an internal node $u$ of depth $d>0$, its \tdef{certificate} is the first leaf in prefix order that makes $u$ satisfies the Condition~(2'). We can see that the definition of sticky trees is reminiscent to that of decorated trees. Indeed, sticky trees can be seen as a variant of decorated trees. We need further definitions to clarify this point.

We define $\mathcal{RS}_n$ as the set of decorated trees with $n+1$ internal nodes and $n+1$ leaves, such that each internal node has a leaf as its first child in the prefix order. The upper-left corner of Figure~\ref{fig:decorated-sticky} shows an example of a decorated tree in $\mathcal{RS}_6$. For readers familiar with \nonsep{}, these decorated trees are in bijection with synchronized intervals of type $(NE)^n$, which are in turn in bijection with Tamari intervals of size $n$.

\begin{figure}
  \begin{center}
    \includegraphics[page=12,scale=0.85]{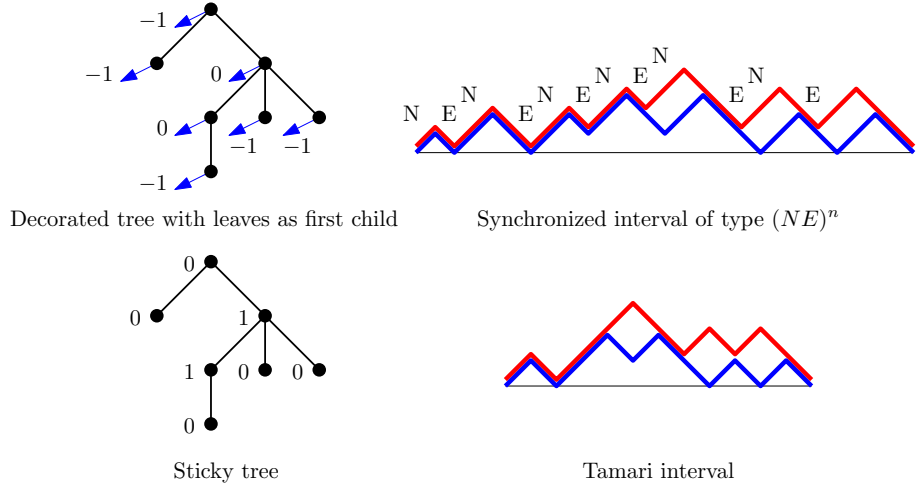}
  \end{center}
  \caption{Example of a decorated tree and its corresponding synchronized interval of type $(NE)^n$, and their corresponding sticky tree and Tamari interval}
  \label{fig:decorated-sticky}
\end{figure}

We now define the following bijection $\textrm{Ctr}$ from $\mathcal{RS}_n$ to $\mathcal{S}_n$: let $R \in \mathcal{RS}_n$, we delete all leaves in $R$ and move their labels to their parent while adding $1$ to labels to obtain $\textrm{Ctr}(R)$. This transformation is well-defined, since leaves and internal nodes in trees in $\mathcal{RS}_n$ are in one-to-one correspondence. It is also clear that $\textrm{Ctr}$ is invertible, and the inverse $\textrm{Ctr}$ is given by adding a leaf to each node as the first child, then move the labels on nodes to the leaves while subtracting $1$. We have the following proposition.

\begin{prop} \label{prop:contract-sticky}
  The transformation $\textrm{Ctr}$ is a bijection between $\mathcal{S}_n$ and $\mathcal{RS}_n$.
\end{prop}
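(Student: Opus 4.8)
The plan is to prove the proposition by matching the three defining conditions of $\mathcal{RS}_n$ with the three defining conditions of $\mathcal{S}_n$ one at a time, establishing that $\textrm{Ctr}(R)$ satisfies the sticky-tree conditions \emph{if and only if} $R$ satisfies the decorated-tree conditions. Since the text already records that $\textrm{Ctr}$ and the stated leaf-insertion map are mutually inverse as maps on labeled trees, this single if-and-only-if equivalence is exactly what is left to prove: it yields simultaneously $\textrm{Ctr}(\mathcal{RS}_n) \subseteq \mathcal{S}_n$ and $\textrm{Ctr}^{-1}(\mathcal{S}_n) \subseteq \mathcal{RS}_n$, and hence the bijection.

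First I would fix the structural dictionary. Because $R \in \mathcal{RS}_n$ has $n+1$ internal nodes, $n+1$ leaves, and each internal node carries a leaf as its first child, every leaf is the first child of a unique internal node; write $f_u$ for the first-child leaf of an internal node $u$. Deleting the leaves removes no vertex lying on a root-to-internal-node path (such paths pass only through internal nodes), so $\textrm{Ctr}$ is a depth-preserving isomorphism from the internal nodes of $R$ onto the nodes of $\textrm{Ctr}(R)$, which therefore has $n$ edges as required. Moreover $f_u$ immediately follows $u$ in the prefix order of $R$, so the prefix order of the leaves $f_u$ matches the prefix order of the nodes $u$ in $\textrm{Ctr}(R)$; and by definition $\ell_{\textrm{Ctr}(R)}(u) = \ell_R(f_u) + 1$. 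This single $+1$ shift on labels, together with depth preservation, will drive all three equivalences.

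Next I would dispatch Conditions~1 and 2, which are direct. For a node $u$ of depth $d$, Condition~1' ($-1 \le \ell_R(f_u) \le d-1$) becomes Condition~1 ($0 \le \ell_{\textrm{Ctr}(R)}(u) \le d$) upon adding $1$; in particular the root forces label $0$ as the convention demands. For Condition~2, a descendant leaf $f$ of $u$ with $\ell_R(f) < d-1$ is exactly $f_w$ for a node $w$ in the subtree of $u$ in $\textrm{Ctr}(R)$ with $\ell_{\textrm{Ctr}(R)}(w) < d$, and conversely, so the existence statement of Condition~2' and that of Condition~2 coincide (with $w=u$, i.e. $v=u$, permitted in both).

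The main work, and the step I expect to be the most delicate, is Condition~3, because of the interplay between the depth shift and the prefix order. Here Condition~3' quantifies over a node $t$ of depth $d$ and a child $u$ (depth $d+1$), asking about leaves $f$ in $R_u$ with $\ell_R(f) = d$. Under the dictionary such an $f$ is $f_w$ for a node $w$ in the subtree $S_u$ of $\textrm{Ctr}(R)$ with $\ell_{\textrm{Ctr}(R)}(w) = d+1 = \mathrm{depth}(u)$, and the leaves of $R_u$ strictly preceding $f$ correspond to the nodes of $S_u$ strictly preceding $w$ (with $u$ itself, via $f_u$, being the first such leaf). Hence ``all preceding leaves have label $\ge d$'' translates precisely into ``all preceding nodes have label $\ge d+1 = \mathrm{depth}(u)$'', which is Condition~3 at the node $u$. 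The bookkeeping to get right is that $u$ ranges over the non-root nodes of $\textrm{Ctr}(R)$ exactly as the child $u$ of $t$ ranges in Condition~3' (the root case of Condition~3 being vacuous), and that the first-child convention is what makes the two prefix orders compatible; once this is in place the equivalence is a genuine if-and-only-if, which settles both inclusions and hence the bijection. I would close by remarking that this same dictionary carries the certificate of an internal node $u$ in $R$ to the certificate of $u$ in $\textrm{Ctr}(R)$, which is precisely what makes $\textrm{Ctr}$ compatible with the certificate-counting function $c$.
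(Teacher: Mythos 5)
Your proposal is correct and takes essentially the same approach as the paper: assuming the already-recorded invertibility of $\textrm{Ctr}$ on labeled trees, both proofs reduce the proposition to the condition-by-condition equivalence of Conditions~1--3 for $S$ with Conditions~1'--3' for $R$ under the depth-preserving, label-shift-by-one dictionary. Your write-up is simply more explicit than the paper's (spelling out the order-preservation of $u \mapsto f_u$, the vacuous root and leaf-child cases of Condition~3, and a direct translation for Condition~2 where the paper invokes the correspondence of certificates), but the underlying argument is the same.
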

\begin{proof}
  Let $S$ be a tree with labels on all nodes and $R$ a tree with only labels on leaves such that $S=\textrm{Ctr}(R)$. We only need to show that $R \in \mathcal{RS}_n$ if and only if $S \in \mathcal{S}_n$. The proof consists of comparing the conditions of sticky trees and of decorated trees under the transformation $\textrm{Ctr}$. From the definition of $\textrm{Ctr}$, the tree $S$ satisfies Conditions~(1) of sticky trees if and only if $R$ satisfies Conditions~(1') of decorated trees. For Condition~(2) and (2'), we only need to observe that the certificate node of a node $u$ in $S$ is exactly the parent of the certificate leaf of the corresponding node $u'$ in $R$. For Condition~(3) and (3'), we observe that (3') is a condition on a sub-tree rooted at $u$, which is a child of $t$, a node of depth $d$. Therefore, in $R$, we are dealing with labels $d$ in a sub-tree of depth $d+1$, while in $S$ it is labels $d$ in a sub-tree of depth $d$. Since labels are incremented by $1$ from $R$ to $S$, the two conditions are equivalent.
\end{proof}

As we observed in the proof of Proposition~\ref{prop:contract-sticky}, we have the following property of certificates of both sticky trees and decorated trees in $\mathcal{RS}_n$.

\begin{prop}
  Let $S$ be a sticky tree and $R=\textrm{Ctr}(S)$ its corresponding decorated tree. Suppose that $f$ is the certificate of an internal node $u$ in $R$, and the internal node $v$ is the parent of $f$. Let $u'$, $v'$ be the nodes corresponding to $u$ and $v$ in $S$. Then $v'$ is the certificate of $u'$.
\end{prop}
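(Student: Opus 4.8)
The plan is to make precise the observation already invoked in the proof of Proposition~\ref{prop:contract-sticky}, namely that certificates correspond under $\textrm{Ctr}$. First I would set up the dictionary between $R$ and $S$. Since a tree in $\mathcal{RS}_n$ has $n+1$ internal nodes and $n+1$ leaves, and each internal node carries a leaf as its first child, the map sending an internal node to its first child is an injection from the $n+1$ internal nodes into the $n+1$ leaves, hence a bijection; consequently \emph{every} leaf of $R$ is the first child of exactly one internal node, and each internal node has exactly one leaf child. Deleting the leaves therefore puts the internal nodes of $R$ in bijection with the nodes of $S$; write $w \mapsto w'$. This bijection preserves depth (deleting leaves does not change ancestor relations among internal nodes) and preserves the prefix order (restricted to internal nodes of $R$ it coincides with the prefix order of $S$), and by the definition of $\textrm{Ctr}$ it transports labels by $\ell_S(w') = \ell_R(g_w) + 1$, where $g_w$ denotes the first-child leaf of $w$.

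Next I would translate the two certificate conditions through this dictionary. Let $d$ be the depth of $u$ in $R$, which equals the depth of $u'$ in $S$. By Condition~(2'), the certificate $f$ of $u$ is the first descendant leaf of $u$ in the prefix order with $\ell_R(f) < d-1$; since $v$ is its parent we have $f=g_v$, whence $\ell_S(v') = \ell_R(f)+1 < d$. Thus $v'$ is a node of the sub-tree rooted at $u'$ (allowing $v'=u'$) with $\ell_S(v') < d$, so $v'$ is a legitimate witness for Condition~(2) on $u'$. What remains is to show that $v'$ is the \emph{earliest} such witness, i.e.\ the certificate of $u'$.

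For minimality I would argue by contradiction. Suppose some node $w'$ strictly precedes $v'$ in the prefix order of $S$, lies in the sub-tree of $u'$, and satisfies $\ell_S(w') < d$. Pulling back, $w$ is a descendant of $u$ in $R$ with $\ell_R(g_w) = \ell_S(w') - 1 < d-1$, so $g_w$ is a descendant leaf of $u$ satisfying the defining inequality of the certificate. The crucial point is that $g_w$ precedes $f=g_v$ in the prefix order of $R$: because $w'$ precedes $v'$, the internal node $w$ precedes $v$, and since each first-child leaf is visited immediately after its parent, $w$ before $v$ forces $g_w$ before $g_v$ (in the subcase where $v$ lies in the sub-tree of $w$ one has $g_w < v < g_v$, and otherwise the whole sub-tree of $w$, hence $g_w$, is visited before $v$, hence before $g_v$). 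This contradicts $f$ being the first descendant leaf of $u$ witnessing Condition~(2'). Hence no such $w'$ exists, and $v'$ is the certificate of $u'$.

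The main obstacle is the order-preservation bookkeeping in the last step: one must verify carefully that the correspondence $w \mapsto g_w$ between internal nodes and their first-child leaves is order-preserving for the prefix order, and handle cleanly the boundary case $v=u$, which corresponds to $u$'s own first-child leaf being its certificate in $R$ and matches $v'=u'$ being its own certificate in $S$. Everything else reduces to substituting the label relation $\ell_S(w')=\ell_R(g_w)+1$ into the two certificate definitions.
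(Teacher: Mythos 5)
Your proof is correct and follows exactly the route the paper intends: the paper states this proposition as an observation extracted from the proof of Proposition~4.7 (``the certificate node of a node in $S$ is exactly the parent of the certificate leaf of the corresponding node in $R$''), and your argument is a rigorous elaboration of that same dictionary --- depth preservation, the label shift $\ell_S(w')=\ell_R(g_w)+1$, and prefix-order preservation under $\textrm{Ctr}$. Your verification that $w\mapsto g_w$ is order-preserving (which is immediate since $g_w$ is the prefix-order successor of $w$) and your handling of the case $v=u$ supply details the paper leaves implicit, but the approach is the same.
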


We now define the \tdef{certificate-counting} function $c$ of a tree $R$ in $\mathcal{RS}_n$, which is a function on leaves of $R$, and for a leaf $f$, $c(f)$ is the number of internal node $u$ in $R$ such that $f$ is the certificate of $u$. We thus have the following corollary.

\begin{coro} \label{coro:cc-equal}
  Let $S$ be a sticky tree and $R=\textrm{Ctr}(S)$, with $c_S$ and $c_R$ their certificate function respectively. We have $c_S(u) = c_R(f)$ for every node $u$ in $S$, with $f$ the first child of the corresponding internal node $u'$ in $R$.
\end{coro}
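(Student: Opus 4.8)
The plan is to read the identity $c_S(u) = c_R(f)$ as an equality of fiber sizes of the two certificate functions, and to deduce it from the node correspondence already set up by $\textrm{Ctr}$ together with the proposition just established.

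First I would fix the relevant bijections. The map $\textrm{Ctr}$ of Proposition~\ref{prop:contract-sticky} and its inverse put the internal nodes of $R$ in one-to-one correspondence with the nodes of $S$; write $\psi$ for this bijection, so that $u'$, the internal node associated to $u$, equals $\psi^{-1}(u)$. Since a tree in $\mathcal{RS}_n$ has exactly $n+1$ internal nodes and $n+1$ leaves and each internal node carries a leaf as its first child, every leaf of $R$ is the first child of a unique internal node. Hence $f \mapsto \mathrm{parent}(f)$ is a bijection from leaves of $R$ to internal nodes of $R$, and composing with $\psi$ gives a bijection $\lambda$ from leaves of $R$ to nodes of $S$, namely $\lambda(f) = \psi(\mathrm{parent}(f))$. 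With this notation, the hypothesis that $f$ is the first child of $u'$ says exactly $\mathrm{parent}(f) = u'$, that is, $\lambda(f) = u$.

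Next I would record that the preceding proposition makes $\psi$ and $\lambda$ intertwine the two certificate functions. Writing $\mathrm{cert}_R$ and $\mathrm{cert}_S$ for the maps sending a node to its certificate, that proposition states that if a leaf is the certificate of an internal node $a$ of $R$, then $\psi$ applied to the parent of that leaf is the certificate of $\psi(a)$ in $S$; in our notation this is precisely the commuting relation $\lambda(\mathrm{cert}_R(a)) = \mathrm{cert}_S(\psi(a))$, valid for every internal node $a$ of $R$ of depth $>0$.

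Finally I would turn this commuting relation into the desired equality of counts. Because $\lambda$ and $\psi$ are bijections, for a fixed leaf $f$ the restriction of $\psi$ carries $\{a : \mathrm{cert}_R(a) = f\}$ bijectively onto $\{b : \mathrm{cert}_S(b) = \lambda(f)\}$: the forward inclusion is the commuting relation, and the reverse follows by applying $\psi^{-1}$ and using the injectivity of $\lambda$. The first set has size $c_R(f)$ and the second has size $c_S(\lambda(f)) = c_S(u)$, whence $c_R(f) = c_S(u)$. I do not expect a genuine obstacle here; the only points that need a line of care are that every leaf of $R$ is indeed a first child (so that $\lambda$ is well defined and bijective) and that a commuting square with bijective vertical maps automatically yields the reverse inclusion, so that no separate converse of the preceding proposition is required. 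Nodes that are nobody's certificate (such as the roots) simply contribute empty fibers on both sides, so the statement holds uniformly over all nodes $u$.
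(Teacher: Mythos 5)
Your proof is correct and follows the paper's intended route: the corollary is stated there as an immediate consequence of the preceding proposition (that $\textrm{Ctr}$ intertwines the two certificate maps) together with the bijectivity of the node/leaf correspondences, which is exactly the fiber-counting argument you spell out. Your added care about every leaf being a first child (so $\lambda$ is well defined) and about deducing the reverse inclusion from bijectivity and totality of the certificate functions fills in precisely the details the paper leaves implicit.
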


In \nonsep{}, the following proposition was implicitly proved, which states that we can recover the labels of a decorated tree by their certificate-counting functions (called ``charges'' in \nonsep{}).

\begin{prop}[See Propositions~4.6 and 4.8 in \nonsep{}]
  Let $R$ be a decorated tree. Given the tree structure and the certificate-counting function $c_R$ of $R$, we can recover leaf labels of $R$, and this is a bijection.
\end{prop}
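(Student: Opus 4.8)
The plan is to establish the claim by transporting it to the setting of \nonsep{}, where decorated trees and their certificate-counting functions (the ``charges'') are the primary objects, and then to invoke Propositions~4.6 and~4.8 of \nonsep{}; I will also indicate the shape of a self-contained argument. The forward map here is the assignment $\ell \mapsto c_R$ sending a valid leaf-labeling of a fixed plane tree (one satisfying Conditions~(1')--(3')) to its certificate-counting function. The goal is to show this map is a bijection onto the set of admissible charge functions and that its inverse, the recovery of $\ell$, is effective.

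First I would set up the recovery as a single sweep through the leaves in prefix order, maintaining the set of \emph{pending} internal nodes, namely those ancestors $u$ of the current leaf whose certificate has not yet been encountered. The key local observation is that when we reach a leaf $f$, a pending ancestor $u$ of depth $d$ receives $f$ as its certificate precisely when $\ell(f) < d-1$; consequently $c_R(f)$ equals the number of pending ancestors $u$ with $d > \ell(f)+1$. Since the certificate of an internal node is always one of its descendant leaves, every node counted by $c_R(f)$ is indeed such a pending ancestor, so this furnishes one equation relating the known quantity $c_R(f)$ to the unknown $\ell(f)$ through the (already determined) set of pending depths.

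Next I would argue that this equation, together with Condition~(3'), forces a unique admissible value of $\ell(f)$. Writing $\lambda=\ell(f)$, Condition~(3') applied at the ancestor of depth $\lambda$ (when $\lambda\ge 0$) forbids any leaf preceding $f$ in the subtree of the depth-$(\lambda+1)$ ancestor from carrying a label below $\lambda$, which supplies an upper bound on $\lambda$; the requirement that exactly $c_R(f)$ of the pending ancestors be certified supplies the matching lower bound. The two squeeze $\lambda$ to a single value compatible with Condition~(1'). Feeding the recovered label back in updates the pending set, and the sweep continues. One then checks, by induction on the prefix order, that the labeling produced is genuinely a decorated tree, that $\ell\mapsto c_R$ and the recovery are mutually inverse, and that every admissible charge function is realized.

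The main obstacle is exactly this inductive verification: the set of pending ancestors need not be a contiguous range of depths, so the uniqueness of the recovered label is delicate and hinges on a careful interplay of Conditions~(2') and~(3') maintained as an invariant along the sweep. This is precisely the technical content isolated in Propositions~4.6 and~4.8 of \nonsep{}; via the bijection $\textrm{Ctr}$ of Proposition~\ref{prop:contract-sticky} and the identification of certificate-counting functions in Corollary~\ref{coro:cc-equal}, the present statement is a direct translation of those two propositions, so I would either cite them or reproduce their induction in the language of decorated trees.
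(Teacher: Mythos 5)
Your proposal takes essentially the same approach as the paper: the paper offers no proof of this proposition at all, presenting it purely as a citation of Propositions~4.6 and~4.8 of \nonsep{} (where it is ``implicitly proved''), which is exactly what you do, and your pending-ancestor sweep is a reasonable sketch of the cited recovery argument, with the delicate uniqueness step correctly flagged and deferred to the citation. One minor correction: no translation via $\textrm{Ctr}$ is needed for this statement, since it is already phrased for decorated trees, the native objects of \nonsep{}; $\textrm{Ctr}$ and the identification of certificate-counting functions enter only afterwards, to transfer the result to sticky trees in the ensuing corollary.
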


Composing with the bijection $\textrm{Ctr}$, with Corollary~\ref{coro:cc-equal}, we have the following corollary.

\begin{coro}
  Let $S$ be a sticky tree. Given the tree structure and the certificate-counting function $c_S$ of $S$, we can recover labels of $S$, and this is a bijection.
\end{coro}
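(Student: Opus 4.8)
The plan is to reduce the statement to the corresponding result for decorated trees via the bijection $\textrm{Ctr}$, so that the hard work is done by the cited result from \nonsep{}. First I would observe that the underlying tree structure of $S$ already determines that of the decorated tree $R$ associated to $S$ under $\textrm{Ctr}$. Indeed, a tree in $\mathcal{RS}_n$ has $n+1$ internal nodes and $n+1$ leaves, and each internal node carries a leaf as its first child; counting forces every leaf to be the first child of exactly one internal node. Hence $R$ is obtained from $S$ purely combinatorially, by attaching one leaf as the first child of each node. This step uses only the tree structure, not the labels, and is deterministic and reversible.

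Next I would transport the certificate-counting data across $\textrm{Ctr}$. By Corollary~\ref{coro:cc-equal}, for each node $u$ of $S$ with corresponding internal node $u'$ of $R$ and first-child leaf $f$, we have $c_S(u) = c_R(f)$. Because, as noted above, every leaf of $R$ arises as the first child of exactly one internal node, this identity recovers the full certificate-counting function $c_R$ on the leaves of $R$ from $c_S$ (and conversely). Thus the pair (tree structure of $S$, function $c_S$) is converted into the pair (tree structure of $R$, function $c_R$) in a reversible way.

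At this point I would invoke the cited result from \nonsep{} (Propositions~4.6 and~4.8): for a decorated tree, the tree structure together with the certificate-counting function $c_R$ determines the leaf labels of $R$, bijectively. Finally I would pull the recovered labels back to $S$ through the label-shifting rule built into $\textrm{Ctr}$, namely $\ell_S(u) = \ell_R(f) + 1$ for $f$ the first child of $u'$, which is plainly invertible. Composing the three reversible steps — structure transfer, data transfer via Corollary~\ref{coro:cc-equal}, and label recovery for decorated trees — yields the desired bijection.

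I do not expect a serious obstacle, since the statement is essentially a transport of the decorated-tree result through $\textrm{Ctr}$; the real content lives in the cited \nonsep{} propositions. The only points requiring genuine care are verifying that $c_S$ determines $c_R$ on \emph{all} leaves (which rests on the fact that in $\mathcal{RS}_n$ every leaf is a first child), and checking that the uniform shift by $1$ in $\textrm{Ctr}$ makes the recovered leaf labels of $R$ correspond exactly to the node labels of $S$, so that the composed map is well defined in both directions.
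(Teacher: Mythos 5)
Your proposal is correct and follows essentially the same route as the paper, whose proof of this corollary is exactly the composition you describe: the bijection $\textrm{Ctr}$ of Proposition~\ref{prop:contract-sticky}, the transfer of certificate-counting data via Corollary~\ref{coro:cc-equal}, and the label-recovery result for decorated trees cited from \nonsep{}. Your two explicit verifications (that every leaf of a tree in $\mathcal{RS}_n$ is the first child of a unique internal node, so $c_S$ determines $c_R$ everywhere, and that the shift by $1$ is invertible) simply spell out details the paper leaves implicit.
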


This is exactly what we need to establish bijections between sticky trees, Tamari intervals and closed flows on forests.

We observe that, since the bijections from sticky trees to Tamari intervals and closed flows on forests only rely on the certificate-counting function $c$ of sticky trees, we can construct a direct bijection from Tamari intervals to closed flows on forests. Moreover, the upper path of a Tamari interval corresponds to the shape of its sticky tree, which in turns determines the shape of the forest on which the corresponding closed flow lives. Therefore, we have an alternative bijective proof of the following theorem in \cite{chapoton-chatel-pons}.

\begin{thm}[Theorem~4.1 in \cite{chapoton-chatel-pons}]
Given a Dyck path $D$ of length $2n$, there is a forest $F(D)$ with $n$ nodes such that the number of elements $E$ smaller than $D$ in the Tamari lattice of order $n$ is the number of closed flows on $F(D)$.
\end{thm}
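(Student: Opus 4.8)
The plan is to chain together the two bijections out of sticky trees constructed in this section, exploiting the fact that both factor through the pair consisting of the underlying tree shape and the certificate-counting function $c$. The starting observation is that the elements $E$ with $E \preceq_T D$ are in one-to-one correspondence with the Tamari intervals whose upper path is exactly $D$, namely the intervals $[E,D]$; so it suffices to count such intervals.

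First I would recall how $\mathrm{I}$ acts: it sends a sticky tree $S$ to $[\mathrm{D}(S),\mathrm{E}(S)]$, where the upper path $\mathrm{E}(S)$ records the depth variation along the counter-clockwise contour of $S$ and hence depends only on the plane-tree shape of $S$, whereas the lower path $\mathrm{D}(S)=ud^{c(v_1)}\cdots ud^{c(v_k)}$ depends only on $c$. By the standard contour correspondence between plane trees with $n$ edges and Dyck paths of length $2n$, the condition $\mathrm{E}(S)=D$ pins down the shape of $S$ to a single plane tree $T_D$ on $n+1$ vertices. Consequently, under $\mathrm{I}^{-1}$ the Tamari intervals with upper path $D$ correspond precisely to the sticky trees whose shape is $T_D$.

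Next I would invoke the bijection to closed flows. Let $F(D)$ be the forest with $n$ nodes obtained by deleting the root of $T_D$; the map $S \mapsto (v \mapsto c(v)-1)$ gives a closed flow on $F(D)$. Because this bijection again factors through $(\text{shape},c)$, and because the preceding corollary lets us recover the labels of a sticky tree from its shape together with $c$, the sticky trees of shape $T_D$ are in bijection with the closed flows on $F(D)$. Composing the two correspondences yields that the number of Tamari intervals with upper path $D$, equivalently the number of $E\preceq_T D$, equals the number of closed flows on $F(D)$, which is the claim.

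The only point requiring care, though it is routine once the machinery is in place, is to confirm that fixing the upper path $D$ is genuinely equivalent to fixing the shape $T_D$ and letting only $c$ vary, and that the resulting freedom in $c$ matches exactly the freedom in the flow. Both facts follow from the common factorization of the two bijections through $(\text{shape},c)$ together with the recoverability of labels from $(\text{shape},c)$; no further computation is needed, and the asserted forest $F(D)$ is simply $T_D$ with its root removed.
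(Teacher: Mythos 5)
Your proposal is correct and follows essentially the same route as the paper: the paper likewise observes that both bijections out of sticky trees factor through the certificate-counting function $c$, that the upper path of the interval corresponds to the plane-tree shape of the sticky tree (which, with the root deleted, gives the forest $F(D)$), and then chains the two correspondences using the recoverability of labels from the pair $(\text{shape}, c)$. No substantive difference in decomposition or key lemma.
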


By composing the bijection from sticky trees to Tamari intervals with the one described in Section~\ref{sec:bij-tri}, we obtain a bijection from planar triangulations to Tamari intervals. Experimentally, this new bijection is different from the one in \cite{BB2009intervals}. Their relation is to be investigated.

We also observe that, the bijection from Tamari intervals to sticky trees, when restricted to synchronized intervals, gives sticky trees in a special form: all its internal nodes are primary. It is because, for a synchronized interval $[P,Q]$, an up step $u_i$ in $P$ is followed by a down step -- which means the corresponding node $v_i$ in the resulting sticky tree has $c(v_i) > 0$ -- if and only if its counterpart in $Q$ is also followed by a down step, which means the same node $v_i$ is a leaf. Let $\mathcal{SS}_n$ be the set of these sticky trees coming from synchronized intervals of length $2n$. We check easily that, for a tree $S \in \mathcal{SS}_n$, by removing labels on internal nodes and subtracting $1$ from those on leaves, we obtain a decorated tree $S'$, and this is a bijection. Therefore, we can also see the bijections in \nonsep{} between synchronized intervals and decorated trees as a special case of the bijections here between Tamari intervals and sticky trees.

\section{Discussion} \label{sec:dis}

We have seen bijections between sticky trees and various combinatorial objects. It is not surprising that our bijections transfer interesting statistics and structures between these objects. For instance, the number of primary nodes in sticky trees has the same distribution as the number of vertices in bridgeless planar maps and the number of vertices with a negative input in closed flows on forests. We can thus obtain non-trivial structural results with our bijections. For instance, vertical symmetry is a straight-forward involution on closed flows on forests, which means that the length of the left-most and the right-most branch have the same distribution in sticky trees, therefore the \emph{initial rise} (\textit{i.e.}, the number of initial up steps) and the \emph{final descent} (\textit{i.e.}, the number of final down steps) of upper paths also have the same distribution in Tamari intervals. This non-trivial result is an immediate corollary of results in \cite{chapoton-chatel-pons}, and it can also be seen lucidly under our bijection. We thus expect further study in this direction to reveal more hidden structures of these objects, similar to what we have done in \cite{trinity-duality} for non-separable planar maps, synchronized intervals and related objects.

Another motivation comes from intervals in the $m$-Tamari lattice. In \cite{bousquet-fusy-preville}, it has been proved that the number of intervals in the $m$-Tamari lattice can be expressed by a formula similar to those in planar map enumeration. We thus want to find a natural class of planar maps in bijection with these intervals. Although we can cast $m$-Tamari intervals as generalized Tamari intervals with a canopy of the form $(NE^m)^n$ (\textit{cf.} \cite{PRV2014extension}), the direct restriction of the bijection between non-separable planar maps and generalized Tamari intervals in \cite{nonsep} does not give a natural class of planar maps. Our work here on sticky trees can be seen as an effort to adapt the bijection in \cite{nonsep} to $m$-Tamari intervals in the special case $m=1$. It remains to see if a similar adaptation applies for general $m$.

\section*{Acknowledgements}
  We thank Guillaume Chapuy, Eric Fusy and Louis-François Préville-Ratelle for their inspiring discussions and useful comments. We also thank the anonymous referees for their comments that greatly improves the presentation of this article.

\bibliographystyle{alpha}
\bibliography{fang-bij-maps}

\begin{thebibliography}{BMFPR11}

\bibitem[BB09]{BB2009intervals}
O.~Bernardi and N.~Bonichon.
\newblock Intervals in {C}atalan lattices and realizers of triangulations.
\newblock {\em J. Combin. Theory Ser. A}, 116(1):55--75, 2009.

\bibitem[BMFPR11]{bousquet-fusy-preville}
M.~Bousquet-M\'elou, \'E. Fusy, and L.-F. Pr\'eville-Ratelle.
\newblock The number of intervals in the $m$-{T}amari lattices.
\newblock {\em Electron. J. Combin.}, 18(2):Research Paper 31, 26 pp.
  (electronic), 2011.

\bibitem[BPR12]{bergeron-preville}
Fran{\c{c}}ois Bergeron and Louis-Fran{\c{c}}ois Pr{\'e}ville-Ratelle.
\newblock Higher trivariate diagonal harmonics via generalized {T}amari posets.
\newblock {\em J. Comb.}, 3(3):317--341, 2012.

\bibitem[CCP14]{chapoton-chatel-pons}
F.~Chapoton, G.~Ch{\^a}tel, and V.~Pons.
\newblock Two bijections on {T}amari intervals.
\newblock In {\em 26th {I}nternational {C}onference on {F}ormal {P}ower
  {S}eries and {A}lgebraic {C}ombinatorics ({FPSAC} 2014)}, Discrete Math.
  Theor. Comput. Sci. Proc., AT, pages 241--252. Assoc. Discrete Math. Theor.
  Comput. Sci., Nancy, 2014.

\bibitem[Cha06]{ch06}
F.~Chapoton.
\newblock Sur le nombre d'intervalles dans les treillis de {T}amari.
\newblock {\em S\'em. Lothar. Combin.}, pages Art. B55f, 18 pp. (electronic),
  2006.

\bibitem[Fan17]{trinity-duality}
W.~Fang.
\newblock A trinity of duality: non-separable planar maps, {$\beta$-(1,0)}
  trees and synchronized intervals.
\newblock submitted, arXiv:1703.02774, 2017.

\bibitem[FPR16]{nonsep}
W.~Fang and L.-F. Pr{\'e}ville-Ratelle.
\newblock The enumeration of generalized {T}amari intervals.
\newblock {\em European J. Combin.}, 61:69--84, 2016.

\bibitem[Fus10]{fusy-bij}
{\'E}.~Fusy.
\newblock New bijective links on planar maps via orientations.
\newblock {\em European J. Combin.}, 31(1):145--160, 2010.

\bibitem[PRV16]{PRV2014extension}
L.-F. Pr{\'e}ville-Ratelle and X.~Viennot.
\newblock An extension of {T}amari lattices.
\newblock {\em To appear in Transactions of the AMS}, 2016.
\newblock arXiv:1406.3787.

\bibitem[Sch15]{Schaeffer:survey}
G.~Schaeffer.
\newblock Planar maps.
\newblock In {\em Handbook of Enumerative Combinatorics}. CRC Press, 2015.

\bibitem[Tut62]{tutte1962triangulation}
W.~T. Tutte.
\newblock A census of planar triangulations.
\newblock {\em Canad. J. Math.}, 14:21--38, 1962.

\bibitem[WL75]{walsh-lehman}
T.~R.~S. Walsh and A.~B. Lehman.
\newblock Counting rooted maps by genus. {III}: {N}onseparable maps.
\newblock {\em J. Combin. Theory Ser. B}, 18:222--259, 1975.

\bibitem[Wor80]{wormald}
N.~C. Wormald.
\newblock A correspondence for rooted planar maps.
\newblock {\em Ars Combin.}, 9:11--28, 1980.

\end{thebibliography}
\end{document}